\documentclass[preprint,12pt]{elsarticle}

\usepackage[hang]{footmisc}
\usepackage{amsmath,rotating}
\usepackage{amssymb}
\usepackage{amsthm}
\usepackage{subfigure}
\usepackage{psfrag}
\usepackage[sort]{cite}
\usepackage{float}
\usepackage{epic}
\usepackage{color}
\usepackage{bbm}
\usepackage{bm}
\usepackage{pstricks}
\usepackage{pst-plot}
\usepackage{tikz}
\usepackage{natbib}



\newcommand {\bx} {\mathbf{x}}

\newcommand {\br} {\mathbf{r}}

\journal{Comput. Methods Appl. Math. Eng.}
\journal{}

\begin{document}

\begin{frontmatter}

\title{Stochastic Galerkin Framework with Locally Reduced Bases for Nonlinear Two-Phase Transport in Heterogeneous Formations}

\author[UNI]{Per Pettersson\corref{cor1}}
\ead{per.pettersson@uib.no}

\author[STAN]{Hamdi A. Tchelepi}
\ead{tchelepi@stanford.edu}

\cortext[cor1]{Corresponding Author: Per Pettersson}
\address[UNI]{Uni Research CIPR, PO Box 7800, N-5020 Bergen, Norway}
\address[STAN]{Department of Energy Resources Engineering, Stanford University, 367 Panama St., Stanford, CA 94305-2220, USA}

\begin{abstract}
\label{Abstract}
The generalized polynomial chaos method is applied to the Buckley-Leverett equation. We consider a spatially homogeneous domain modeled as a random field. The problem is projected onto stochastic basis functions which yields an extended system of partial differential equations. Analysis and numerical methods leading to reduced computational cost are presented for the extended system of equations. 

The accurate representation of the evolution of a discontinuous stochastic solution over time requires a large number of stochastic basis functions. Adaptivity of the stochastic basis to reduce computational cost is challenging in the stochastic Galerkin setting since the change of basis affects the system matrix itself. To achieve adaptivity without adding overhead by rewriting the entire system of equations for every grid cell, we devise a basis reduction method that distinguishes between locally significant and insignificant modes without changing the actual system matrices. 

Results are presented for problems in one and two spatial dimensions, with varying number of stochastic dimensions. We show how to obtain stochastic velocity fields from realistic permeability fields and demonstrate the performance of the stochastic Galerkin method with local basis reduction. The system of conservation laws is discretized with a finite volume method and we demonstrate numerical convergence to the reference solution obtained through Monte Carlo sampling.

\end{abstract}

\begin{keyword}
Polynomial chaos, Stochastic Galerkin method, Basis Reduction, Subsurface Flow
\end{keyword}

\end{frontmatter}

\section{Introduction}
Reliable numerical predictions of reservoir performance depend on the availability and quality of data describing the heterogeneous reservoir properties, e.g., permeability and porosity. Data are in general limited in quality and quantity, resulting in significant uncertainty in petrophysical reservoir properties. 
 Furthermore, the computational complexity of a realistic reservoir model imposes limits on the resolution of the numerical model, resulting in unresolved subscale phenomena and subsequent uncertainty in effective parameter properties. Uncertainty due to lack of data and complex physical correlation on multiple scales can often be be described within a stochastic framework to estimate risk or establish confidence in predictions used to make decisions in reservoir management. Propagation of input uncertainty through nonlinear models calls for accurate and robust methods to satisfactorily quantify the effects on quantities of interest. Forward uncertainty quantification aims at providing methods to accurately represent and propagate input uncertainty to the flow-performance predictions in a systematic way.

In the stochastic hydrology community, statistical moment equation methods have been popular in order to quantify uncertainty in aquifers with analytical, or semi-analytical, methods \citep{Gelhar_86, Zhang_02}. The variables and material parameters are decomposed into mean and fluctuation, or as an infinite series, and partial differential equations are derived for the moments of contaminant concentrations or hydraulic head, for example. Examples of moment equation methods include \citep{Neuman_Orr_93, Guadagnini_Neuman_99} where integro-differential formulations for the moments of the flux function based on measurements were derived and solved using a finite element method. The perturbation approach in combination with Karhunen-Lo\`{e}ve (KL) decomposition based on the covariance kernel of the conductivity was investigated in \citep{Zhang_Lu_04}. The closure approximation of the partial differential equations derived in these methods may be inaccurate \citep{Jarman_Tartakovsky_13}, and the resulting expressions are limited to small perturbations, i.e., small variance of input parameters \citep{Caroni_Fiorotto_05}. 

Stochastic analysis of nonlinear two-phase transport - the so-called Buckley-Leverett problem - based on moment equations in one and two spatial dimensions was presented in \citep{Zhang_Tchelepi_99} and \citep{Zhang_Li_Tchelepi_00}.
A cumulative density function (CDF) method for the Buckley-Leverett equations with random total volumetric flux was introduced in \citep{Wang_etal_13}. They derived a partial differential equation (PDE) for a Heaviside function involving the time dependent stochastic solution, and the associated CDF was obtained by integration with respect to the probability measure of the saturation.

The generalized polynomial chaos (gPC) framework introduced in the seminal work in \citep{Ghanem:1991} and extended in~\citep{XiuKarniadakis02} allows for representing uncertainty in PDE solutions as functions of uncertainty in the input parameters. 
Unlike the small perturbation assumption frequently made in subsurface flow \citep{Gelhar_86}, the gPC framework imposes no formal constraint on the variance of the solution. The possible limitations in the framework lie instead in the fact that small-scale variations may require large numbers of Karhunen-Lo\`{e}ve terms and lead to prohibitive computational cost. Provided that the velocity field can be accurately represented with a moderately small number of Karhunen-Lo\`{e}ve expansion terms, the KL-gPC framework should prove to be superior to perturbation based methods. Flow and transport in porous media have been treated within a polynomial chaos framework previously, but most efforts and reported success have been achieved for the flow problem, e.g., data-driven polynomial chaos for flow in CO$_2$ storage \citep{Oladyshkin_etal_2011} and infiltration problems in hydrology~\citep{Sochala_LeMaitre_13}. The log transformed hydraulic conductivity field was represented by a KL expansion and the hydraulic head obtained through resepectively Monte Carlo sampling, probabilistic collocation and the stochastic Galerkin methods in \citep{Li_Zhang_07}. A similar stochastic flow model was used in \citep{Muller_etal_11}, followed by model reduction of the flow field for efficient Monte Carlo simulation of the transport problem.

In this work, we extend the generalized polynomial chaos and stochastic Galerkin framework for the Buckley-Leverett equations and present analysis for stochastic fractional flow. The Buckley-Leverett solution is discontinuous, both in the physical and stochastic spaces. Therefore, we use a localized multiwavelet basis~\citep{LeMaitre_etal_04} to represent uncertainty instead of the classical orthogonal polynomials. A multi-element partitioning of the stochastic space \citep{Wan_Karniadakis_05} in combination with local wavelet bases on each stochastic element was applied to the Buckley-Leverett problem in \citep{Koppel_etal_14}. The stochastic multi-elements are mutually independent which allows parallell computation of the element-wise solutions, subsequently assembled to the global solution through a fast post-processing step. The stochastic multi-element method is computationally efficient and well suited to be combined with the methods presented in this paper. However, this will not be pursued here since the focus is somewhat different. Depending on implementation, we have observed lack of convergence to the true solution and further analysis of the problem is therefore necessary. Numerical properties related to convergence were investigated in a single-dimensional setting in~\citep{Pettersson_Tchelepi_14}. In this paper, the focus is on the properties of the stochastic Galerkin system, the representation and resolution of stochastic variables representing the uncertainty, and increasing the computational efficiency through stochastic basis reduction.

The Buckley-Leverett problem is described by a nonlinear hyperbolic PDE, and a stochastic formulation based on generalized polynomial chaos is expected to require a large number of stochastic basis functions for accurate representation. Nevertheless, we believe that it is an attractive alternative to existing methods for solving hyperbolic stochastic PDEs, especially when the wave nature of the problem is exploited to obtain local basis reduction. Sampling based generalized polynomial chaos methods, such as stochastic collocation \citep{Xiu_Hesthaven_05} suffer from the curse of dimensionality, and they become infeasible for large problems due to the prohibitive computational cost. With the continuous growth of computer power, stochastic Galerkin methods including the efficient adaptive and parallelized hybrid stochastic Galerkin solver in~\citep{Burger_etal_14}, have been gaining popularity as a powerful alternative to sampling based methods.

\subsection{Contributions of this Work}
In this paper, we provide analysis of the full stochastic Galerkin system with pre-computed integrals involving stochastic basis functions  and prove that the proposed formulation is hyperbolic. The full stochastic Galerkin projection results in an extended system of equations, but the benefit is that there is no need for repeated expensive numerical quadrature during the flux evaluations. This is quite different from the approach taken in previous work, e.g.~\citep{Tryoen_etal_10,Burger_etal_14}, and therefore their hyperbolicity analysis cannot be extended to this case.  

We outline a methodology to obtain saturation statistics from covariance functions of input parameters using the Karhunen-Lo\`{e}ve expansion and the gPC framework with multiwavelets. Stochastic velocity fields are obtained for a line-injection problem based on covariance functions for the velocity components derived in~\citep{Rubin_wrr_90}. Karhunen-Lo\`{e}ve expansions are also employed to represent a stochastic permeability field that is subsequently used as input to a numerical pressure solver for a quarter five-spot problem. The velocity field is obtained via Darcy's law, and represented as a gPC expansion used to compute the solution to the Buckley-Leverett problem.

We introduce a new locally reduced-order stochastic Galerkin method to alleviate the computational cost of high-order gPC representations of the uncertainties. The local reduction of the order of multiwavelet expansion does not adversely affect the accuracy of the solution, but reduces the numerical cost by neglecting multiwavelet modes that are locally insignificant. The performance of the method is demonstrated for one- and two-dimensional (in physical space) problems, and for both single and multiple stochastic dimensions. Depending on the problem setup, the speedup observed is up to more than an order of magnitude compared to the standard stochastic Galerkin implementation without local basis reduction or other means of adaptivity.


\subsection{Deterministic Formulation of the Buckley-Leverett Equation}
\label{sec:det_bl}
Let $S$ denote the water saturation in a subsurface porous medium, modeled by the Buckley-Leverett equation 
\begin{equation}
\label{eqn:bl_conv_gen}
\phi(x,y) \frac{\partial S}{ \partial t} + \nabla \cdot \left( \bm{q} f(S) \right)  = 0, \quad x,y \in D, t \geq 0,
\end{equation}
defined on a spatial domain $D$, with the scalar flux function $f$ and porosity $\phi$. The total flux $\bm{q} = (q_{(x)}, q_{(y)})$ is given by Darcy's law:
\[
\bm{q} = -\lambda_{T}(S)\nabla p,
\]
where $p$ is pressure and $\lambda_{T}$ is the total mobility, defined as
\[
\lambda_T = k\left( \frac{k_{r_w}}{\mu_{w}} + \frac{k_{r_n}}{\mu_{n}} \right).
\]
Here, $k$ denotes absolute permeability, $k_{r_\alpha}$ ($\alpha=w,n$) is the relative permeability of the wetting ($w$) and non-wetting ($n$) phases, respectively, and $\mu_{\alpha}$ ($\alpha=w,n$) denotes the phase viscosity. We invoke the incompressibility condition
\begin{equation}
\label{eq:inc_cond}
\nabla \cdot \bm{q} = 0.
\end{equation}
To provide an input flux $\bm{q}$ to \eqref{eqn:bl_conv_gen}, we solve \eqref{eq:inc_cond} with a measured, or estimated, total mobility (conductivity) field as input. In the multiphase case, the total mobility takes the role of hydraulic conductivity. Due to limited site-specific measurements, stochastic models for the input are commonly assumed. The permeability field is typically unknown due to the infeasibility of including an exact geologic description. This results in uncertainty in the total mobility.

We rewrite (\ref{eqn:bl_conv_gen}) in the form
\begin{equation}
\label{eqn:bl_conv}
\phi(x,y) \frac{\partial S}{ \partial t} + q_{(x)}\frac{\partial f(S)}{\partial x} + q_{(y)}\frac{\partial f(S)}{\partial y}=0, \quad x, y\in D, t\geq 0.
\end{equation}

\noindent
With the relative-permeability functions $k_{r_w}=S^2$ and $k_{r_n}=(1-S)^2$, the fractional flow flux function $f$ is given by
\begin{equation}
\label{eq:buck_lev_flux}
f(S)=\frac{\frac{k_{r_w}}{\mu_w}}{ \frac{k_{r_w}}{\mu_w} + \frac{k_{r_n}}{\mu_n}}=\frac{S^2}{S^2+a(1-S)^2},
\end{equation}
where $a=\mu_{w}/\mu_{n}$. For analysis of hyperbolicity of the stochastic formulation, we will find it useful to express the Buckley-Leverett equation (\ref{eqn:bl_conv}) in quasi-linear form,
\begin{equation}
\label{eqn:bl_nonconv}
\phi(x,y)\frac{\partial S}{\partial t} + q_{(x)}f'(S) \frac{\partial S}{\partial x} + q_{(y)}f'(S) \frac{\partial S}{\partial y} = 0,
\end{equation}
where
\[
f'(S) = \frac{2aS(1-S)}{(S^2+a(1-S)^2)^2}.
\]

%
%

\section{Representation of Uncertainty}
\label{sec:rep_of_unc}
When the covariance function of a random field is known, it is optimally represented in a spectral expansion by the \textit{Karhunen-Lo\`{e}ve} expansion \citep{ka46, Loeve_77}. The covariance function is often quite difficult to obtain, even for static input parameters (e.g., permeability), and we are interested in quantifying the uncertainty in output parameters that result from a nonlinear dynamic process (multiphase transport). For output quantities of interest, the covariance function is in general not known at all, but we can combine the Karhunen-Lo\`{e}ve framework with multiwavelet expansions to represent input random fields, e.g., permeabilities. The multiwavelet representation is then employed in order to propagate the uncertainty to the outputs of interest.

\subsection{Karhunen-Lo\`{e}ve Expansion}
\label{sec:kl_exp}
Consider a vector valued random field $\bm{g}(x,y)$ in two spatial dimensions with known matrix-valued covariance function $\mathbf{C}_{\bm{g}}(\mathbf{x},\mathbf{x}')$ where $\bx=(x,y)$. In the numerical experiments, $\bm{g}$ will either be equal to the velocity field $\bm{u}=\bm{q}/\phi$, or the total mobility determined by the log-permeability field $\log(k)$ which may be represented as a Gaussian random field~\citep{Hoeksema_Kitanidis_85}. A realistic covariance model for the velocity field must in general be based on numerical simulation of the pressure distributions. Following \citep{Perrin_etal_13}, the field $\bm{g}$ with expectation $\overline{\bm{g}}$ can be approximated by the truncated Karhunen-Lo\`{e}ve expansion
\begin{equation}
\label{eq:kl_exp}
\bm{g}(x,y,\boldsymbol{\xi})= \overline{\bm{g}}(x,y)  +\sum_{k=1}^{d}\sqrt{\lambda_{k}} \bm{g}_{k}^{\scriptscriptstyle{\textup{KL}}}(x,y)\xi_{k},
\end{equation}
where $\boldsymbol{\xi} = (\xi_1,...,\xi_d)$ is a random vector where the entries are uncorrelated random variables with probability measure $\mathcal{P}$. The generalized eigenpairs $(\lambda_k, \bm{g}_k^{ \scriptscriptstyle{\textup{KL}} })$ can be determined from the solution of the generalized eigenvalue problem
\begin{equation}
\label{eq:gen_evpr_kl}
\int_{D}\mathbf{C}_{\bm{g}}(\bx,\bx') \bm{g}_{k}^{\scriptscriptstyle{\textup{KL}}}(\bx')d\bx' = \lambda_{k} \bm{g}_{k}^{\scriptscriptstyle{\textup{KL}}}(\bx), \quad k \in \mathbb{N}^{+}.
\end{equation}
The accuracy of the truncated KL expansion is determined by the order of expansion $d$ and the decay of the eigenvalues $\{ \lambda_k \}$. The Karhunen-Lo\`{e}ve expansion is bi-orthogonal, i.e.  
\begin{eqnarray}
&&\left\langle  \bm{g}_{j}^{\scriptscriptstyle{\textup{KL}}}(\bx), \bm{g}_{k}^{\scriptscriptstyle{\textup{KL}}}(\bx) \right\rangle_{D} \equiv \int_{D}\left( \bm{g}_{j}^{\scriptscriptstyle{\textup{KL}}}(\bx) \right)^{\scriptscriptstyle{T}} \bm{g}_{k}^{\scriptscriptstyle{\textup{KL}}}(\bx) d\bx = \delta_{jk}, \\
&&\left\langle \xi_{j} \xi_{k} \right\rangle_{\Omega} \equiv \int_{\Omega} \xi_{j} \xi_{k} d\mathcal{P}=\delta_{jk}.
\end{eqnarray}
The probability distribution of the random variable $\xi_{k}$ is defined by
\begin{equation}
\label{eq:eta_defs}
\xi_k =  \frac{1}{\sqrt{\lambda_{k}}} \left\langle  \bm{g} - \overline{\bm{g}} , \bm{g}_{k}^{\scriptscriptstyle{\textup{KL}}} \right\rangle_{D}, \quad k \in \mathbb{N}^{+}. 
\end{equation}
In order to exploit the richness of the stochastic representation by accurately estimating $\xi_k$, excessive sampling of~\eqref{eq:eta_defs} would be required. In the following however, we will \textit{assume} that we have sufficient information and choose some probability law for $\{\xi_{k}\}$. A common assumption on the probability law of $\{ \xi_k \}$ is Gaussian distribution. In this case the fact that the random variables are uncorrelated also implies that they are independent. Note that in general, the entries of the vector $\bm{g}$ are not independent. This will be the case for the random velocity field in \citep{Rubin_wrr_90} used in Section \ref{sec:num_res_rub90}.


It is common in subsurface flow to assume the exponential covariance function, attractive for its analytical expressions for eigenvector-eigenvalue pairs. For the log-permeability field $Y=\log(k)$ on a domain of size $L_x \times L_y$, we assume the separable exponential covariance function
\[
C_{Y}(\bx,\bx') = \sigma_{Y}^{2} \exp \left(-\frac{|x-x'|}{l_{x}} -\frac{|y-y'|}{l_{y}} \right),
\]
with correlation lengths $l_{x}$, $l_{y}$, variance $\sigma_{Y}^2$ and $\bx = (x,y)$. For this covariance function, the eigenvalues $\lambda_{k}$ and eigenfunctions $\bm{g}_{k}^{\scriptscriptstyle{\textup{KL}}}$ in~\eqref{eq:kl_exp} are the ordered products of the one-dimensional eigenpairs, given by respectively
\[
\lambda_{k}=\frac{4 l_{x} l_{y}\sigma_{Y}^{2}}{ \left(1+(\omega_{k_x} l_{x})^2 \right) \left(1+(\omega_{k_y} l_{y})^2 \right)} ,
\]
and
\[
\bm{g}_{k}^{\scriptscriptstyle{\textup{KL}}}(\bx)=
\frac{ \left( l_{x}\omega_{k_{x}}\cos(\omega_{k_{x}}x)+\sin(\omega_{k_{x}}x) \right) \left( l_{y}\omega_{k_{y}}\cos(\omega_{k_{y}}y)+\sin(\omega_{k_{y}}y) \right) }{\sqrt{ \left((l_{x}^2\omega_{k_{x}}^{2}+1)L_{x}/2+l_{x} \right) \left((l_{y}^2\omega_{k_{y}}^{2}+1)L_{y}/2+l_{y} \right) }},
\]
where $\omega_{k_{\alpha}}$ are the ordered ($\omega_{k_{\alpha}}<\omega_{k_{\alpha}+1}$) positive solutions of 
\[
(l_{\alpha}^2\omega^2-1)\sin(\omega L_{\alpha}) = 2l_{\alpha}\omega \cos(\omega L_{\alpha}), \quad \alpha = x,y.
\]

%
%

\subsection{Generalized Polynomial Chaos Formulation}
Let $\boldsymbol{\xi} = (\xi_1,\dots,\xi_d)^{T}$ be a vector of random variables parameterizing the uncertainties of the input fields defined on a space $(\Omega, \mathcal{F}, \mathcal{P})$, where $\Omega$ is the set of events, $\mathcal{F}$ is the $\sigma$-algebra and $\mathcal{P}$ is the probability measure. We will assume that the entries of $\boldsymbol{\xi}$ are independent and identically distributed. In this work, $\boldsymbol{\xi}$ is identical to the random vector of the KL expansion introduced in Section~\ref{sec:kl_exp}. The inner product of two functionals $v(\boldsymbol{\xi}),w(\boldsymbol{\xi}) \in L^2(\Omega, \mathcal{P})$ is defined by
\[
\langle v(\boldsymbol{\xi}), w(\boldsymbol{\xi}) \rangle = \int_{\Omega} v(\boldsymbol{\xi}) w(\boldsymbol{\xi}) d\mathcal{P}(\boldsymbol{\xi}).
\]
The inner product induces a norm denoted $\left\| . \right\|_{L^2(\Omega,\mathcal{P})}$.

For $l=1,\dots,d$, let $\{ \psi_{k}(\xi_{l}) \}_{k=0}^{\infty}$ be a univariate basis of e.g., polynomials that are orthonormal w.r.t. the measure of $\xi_{l}$, i.e.
\[
\langle \psi_{i}(\xi_{l}), \psi_{j}(\xi_{l}) \rangle = \delta_{ij}, \mbox{ for all }l=1,\dots, d.
\]
We obtain a multi-variate basis $\{ \psi_{\footnotesize \bm{k}} \}$ by the tensor product
\[
\psi_{\footnotesize \bm{k}}(\boldsymbol{\xi}) = \prod_{l=1}^{d}\psi_{k_l}(\xi_l)
\]
with the multi-index $\bm{k} \in \mathbb{N}_{0}^{d}:=\left\{(k_1,\cdots,k_d): k_l\in \mathbb{N}\cup\{0\} \right\}$. For practical calculations, the multi-index $\bm{k}$ must be truncated in order to generate a finite cardinality basis. We use a total order $p$ basis defined by the index set 
\begin{equation}
\label{eq:complete}
\Lambda_{p,d} = \left\{\bm{k}\in\mathbb{N}_{0}^{d}: \Vert \bm{k}\Vert_{1}\leq p\right\}.
\end{equation} 
The total number of basis functions is then
\[
P=\frac{(p+d)!}{p!d!}.
\] 
To facilitate notation, we re-index the stochastic basis functions starting from 1. Any $f \in L^2(\Omega,\mathcal{P})$ can be represented by the generalized polynomial chaos expansion
\begin{equation}
\label{eq:def_gpc}
f = \sum_{\bm{k} \in \mathbb{N}_{0}^{d} }f_{\bm{k}}\psi_{\bm{k}}(\boldsymbol{\xi}) \approx \sum_{\bm{k} \in \Lambda_{p,d} }f_{\bm{k}}\psi_{\bm{k}}(\boldsymbol{\xi}) = \sum_{k=1}^{P}f_{k}\psi_{k}(\boldsymbol{\xi}),
\end{equation}
where the coefficients $f_{\bm{k}}$ are defined by the projections
\begin{equation}
\label{eq:proj_coe}
f_{\bm{k}}=\left\langle f, \psi_{\bm{k}}(\boldsymbol{\xi})\right\rangle, \quad \bm{k} \in \mathbb{N}^{d}.
\end{equation}
Then, in single-index notation
\[
\left\| f - \sum_{k=1}^{P}f_{k}\psi_{k}(\boldsymbol{\xi}) \right\|_{L^2(\Omega,\mathcal{P})} \rightarrow 0 \mbox{ as } P \rightarrow \infty.
\]

Approximation of the integrals to evaluate the gPC coefficients (\ref{eq:proj_coe}) in general involves quadrature in multiple dimensions. Tensorization of univariate quadrature rules, e.g. Gauss quadrature,  quickly become infeasible as the number of stochastic dimensions grows. Sparse quadrature rules offer a computationally attractive alternative. Smolyak's rule based on a subset of weights and points from a range of tensorized quadrature rules \citep{Smolyak_63} is an important example. Instead of computing the stochastic integrals in every temporal and spatial grid point, we derive an extended system based on stochastic projection. Thus, all stochastic integrals are precomputed, and we are left with solving an extended initial-boundary-value problem.

Some input and output quantities of interest (porosity, saturation) are restricted to intervals that represent physically meaningful solutions. To ensure that stochastic variation does not lead to unbounded quantities, we use stochastic basis functions with bounded support to represent uncertainty.

\subsection{Multiwavelet Representation of Stochastic Functions}
Wavelet bases obtained from multi-resolution analysis for representation of stochastic functions were introduced in~\citep{LeMaitre_etal_04}. Wavelets are localized basis functions organized in a hierarchy of different resolution levels. Each successive resolution level represents finer features of the solution. The wavelets have non-overlapping support within each resolution level, and in this sense they are localized. Still, the basis is global due to the overlapping support of wavelets belonging to different resolution levels. The localization of the basis alleviates the Gibbs phenomenon in the proximity of discontinuities in the stochastic dimension.

We start by briefly describing the multiwavelet basis in a single dimension. A more detailed account can be found in~\citep{LeMaitre_etal_04}. Let $\mathbf{V}_{N_p}$ be the space of polynomials of degree at most $N_p$ defined on the interval $[-1,1]$. The construction of multiwavelets aims at finding a basis of piecewise polynomials for the orthogonal complement of $\mathbf{V}_{N_p}$ in the space $\mathbf{V}_{N_{p}+1}$ of polynomials of degree at most $N_{p}+1$. Merging the bases of $\mathbf{V}_{N_p}$ and that of the orthogonal complement of $\mathbf{V}_{N_p}$ in $\mathbf{V}_{N_{p}+1}$, we obtain a piecewise polynomial basis for $\mathbf{V}_{N_{p}+1}$. Continuing the process of finding orthogonal complements in spaces of increasing degree of piecewise polynomials, leads to a basis for $L_2([-1,1])$.

We first introduce a smooth polynomial basis on $[-1,1]$. Let $\{Le_i(\xi) \}_{i=0}^{\infty}$ be the set of Legendre polynomials that are defined on $[-1,1]$ and orthogonal with respect to the uniform measure. The set $\{Le_{i}(\xi) \}_{i=0}^{N_p}$ is an orthonormal basis for $\mathbf{V}_{N_p}$.

 Following the algorithm in \citep{Alpert_93}, we next introduce a set of mother wavelets $\left\{ \psi_{i}^{W}(\xi) \right\}_{i=0}^{N_p}$ defined on the domain $\xi \in [-1,1]$. 
 By construction, the members of the set $\{ \psi_{i}^{W}(\xi) \}_{i=0}^{N_p}$ are orthogonal to all polynomials of order at most $N_p$, hence the wavelets are orthogonal to the set $\{ Le_{i}(\xi) \}_{i=0}^{N_p}$ of Legendre polynomials. Based on translations and dilations of the mother wavelets, we get the wavelet family
\[
\psi_{i,j,k}^W(\xi)=2^{j/2}\psi_i^W(2^j \xi-k), \qquad i=0,...,N_p, \quad j=0,1,..., \quad k=0,...,2^{j-1}.
\]
We can now define the multiwavelet basis $\{ \psi_{m} \}_{m=0}^{\infty}$ as follows. Let $\psi_m(\xi)$ for $m=0,...,N_p$ be the set of Legendre polynomials up to order $N_p$, and concatenate the indices $i,j,k$ into $m=(N_p+1)(2^j+k-1)+i$ so that $\psi_m(\xi)\equiv \psi_{i,j,k}^{W}(\xi)$ for $m > N_p$. With the multiwavelet basis $\{ \psi_m(\xi)\}_{m=0}^{\infty}$ we can represent any random variable $u(x,t,\xi)$ with finite variance as
\[
u(x,t,\xi)=\sum_{m=0}^{\infty}u_{m}(x,t)\psi_{m}(\xi),
\]
which is in the form (\ref{eq:def_gpc}) with the global polynomials replaced by piecewise polynomials. In the computations, we truncate the wavelet series both in terms of the piecewise polynomial order $N_p$ and the resolution level $N_r$. We retain $P=(N_p+1)2^{N_r}$ terms of the multiwavelet expansion. In multiple dimensions, we use a total order basis within each resolution level, i.e. a total of $P=(p+d)!2^{N_r}/(p!d!)$ basis functions.

The truncated wavelet basis is characterized by the piecewise polynomial order $N_p$ and the number of resolution levels $N_r$. As special cases of the wavelet basis, we obtain the Legendre polynomial basis for $N_r=0$ ($i=j=0$), and the Haar wavelet basis of piecewise constant functions for $N_p=0$. Since the Legendre polynomials are a subset of the multiwavelets, we will henceforth use the term multiwavelets (MW) to denote all the sets of basis functions in this paper.

%
%

\section{Stochastic Galerkin Formulation} 
Inserting the MW expansions of $S$ and $\bm{u} = \bm{q}/\phi$ into (\ref{eqn:bl_conv}) and truncating to a finite order $P$, we arrive at
\begin{equation}
\label{eq:bl_w_gpc_v}
 \sum_{j=1}^{P}\frac{\partial S_j(\mathbf{x},t) }{\partial t} \psi_{j}(\boldsymbol{\xi}) + \left( \sum_{i=1}^{P}\bm{u}_{i} \psi_{i}(\boldsymbol{\xi}) \right) \cdot   \nabla   f \left( \sum_{j=1}^{P}S_j(\mathbf{x},t) \psi_{j}(\boldsymbol{\xi}) \right)  = 0
\end{equation}
Multiplying (\ref{eq:bl_w_gpc_v}) by $\psi_{k}(\boldsymbol{\xi})$, rearranging and integrating w.r.t. the measure $\mathcal{P}$ and using the orthogonality of the stochastic basis, we get
\begin{equation}
\label{eq:bl_sg_mat}
 \frac{\partial S_{k}(\mathbf{x},t) }{\partial t} + \left\langle \left( \sum_{i=1}^{P}\bm{u}_{i} \psi_{i} \right) \cdot \nabla f\left(\sum_{j=1}^{P}S_j \psi_j \right) \psi_{k} \right\rangle = 0, \quad \mbox{ for }k=1,\dots, P,
\end{equation}
where $\left\langle . \right\rangle$ denotes the expectation operator w.r.t. $\boldsymbol{\xi}$.
For completeness of the presentation and for comparison with the analysis later in this paper, we next reproduce the analysis of hyperbolicity for the Buckley-Leverett problem that was previously presented for related but different problems in~\citep{Tryoen_etal_10,Burger_etal_14}.
\newtheorem{thm}{Proposition}  
  \begin{thm}
  \label{thm:class_hyp_pr}
The stochastic Galerkin formulation (\ref{eq:bl_sg_mat}) of any order $P \in \mathbb{N}_{0}^{+}$ is hyperbolic.
  \end{thm}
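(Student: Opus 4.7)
The plan is to recast the projected system (\ref{eq:bl_sg_mat}) in quasi-linear form and exhibit the resulting coefficient matrices as real symmetric, so that hyperbolicity follows immediately from the spectral theorem. This is the template used in~\citep{Tryoen_etal_10,Burger_etal_14} for related problems, and since here the stochastic integrals are still evaluated directly (not replaced by pre-computed triple products), essentially the same reasoning goes through.

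First, I would apply the chain rule inside the expectation, writing $\nabla f(S(\boldsymbol{\xi})) = f'(S(\boldsymbol{\xi}))\,\nabla S(\boldsymbol{\xi})$, substitute the truncated MW expansion $S(\boldsymbol{\xi}) = \sum_l S_l \psi_l(\boldsymbol{\xi})$, and split the divergence into its $x$- and $y$-components. Equation~(\ref{eq:bl_sg_mat}) then becomes
\begin{equation*}
\partial_t S_k + \sum_{l=1}^{P} A_{kl}(\bm{S})\,\partial_x S_l + \sum_{l=1}^{P} B_{kl}(\bm{S})\,\partial_y S_l = 0, \qquad k=1,\dots,P,
\end{equation*}
with
\begin{equation*}
A_{kl}(\bm{S}) = \Bigl\langle \Bigl( \sum_i u_i^{(x)} \psi_i \Bigr)\, f'\!\Bigl( \sum_j S_j \psi_j \Bigr)\, \psi_k \psi_l \Bigr\rangle,
\end{equation*}
and $B_{kl}(\bm{S})$ defined analogously with $u_i^{(x)}$ replaced by $u_i^{(y)}$.

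Second, I would observe that the only factor in the integrand of $A_{kl}$ that depends on the index pair $(k,l)$ is $\psi_k \psi_l$, which is manifestly symmetric under $k \leftrightarrow l$. Hence $A(\bm{S})$ and $B(\bm{S})$ are real symmetric matrices for every admissible state $\bm{S}$ and every truncation order $P$. For an arbitrary unit direction $(n_1,n_2) \in \mathbb{R}^2$, the pencil $n_1 A + n_2 B$ is therefore also real symmetric, and the spectral theorem supplies $P$ real eigenvalues together with a complete orthogonal set of eigenvectors. This is the required (symmetric) hyperbolicity of the quasi-linear system, and consequently of (\ref{eq:bl_sg_mat}), for any $P \in \mathbb{N}_0^+$.

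The only real subtlety is bookkeeping in going from (\ref{eq:bl_sg_mat}) to the quasi-linear form: one must verify that no stray terms break the $k \leftrightarrow l$ symmetry. Since the velocity coefficients $u_i^{(x)}, u_i^{(y)}$ are data (independent of the unknown $\bm{S}$), differentiating the flux with respect to $S_l$ only produces the scalar factor $f'(S(\boldsymbol{\xi}))\psi_l$ inside the expectation, and symmetry is preserved. The proof as sketched is therefore short; the contrast with the setting considered later in the paper, where pre-computed triple products of basis functions take the place of the integrals $\langle\cdot\rangle$, is precisely that the latter form is not automatically symmetric and hyperbolicity requires a genuine argument.
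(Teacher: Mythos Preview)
Your proposal is correct and follows essentially the same approach as the paper: both recast (\ref{eq:bl_sg_mat}) in quasi-linear form, identify the Jacobian matrices (your $A_{kl}, B_{kl}$ are precisely the paper's $[\bm{J}_x]_{kj}, [\bm{J}_y]_{kj}$) as symmetric because only the factor $\psi_k\psi_l$ depends on the index pair, and conclude hyperbolicity from the fact that any real linear combination of symmetric matrices is symmetric and hence diagonalizable with real spectrum. Your remark contrasting this with the pre-computed triple-product formulation also anticipates exactly the point the paper makes in motivating its subsequent proposition.
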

  \begin{proof}
The semi-linear form of (\ref{eq:bl_sg_mat}) in matrix form can be written
\[
\frac{\partial \bm{S}^{P}}{\partial t} + \bm{J}_{x}(\bm{S}^{P})\frac{\partial \bm{S}^{P}}{\partial x} + \bm{J}_{y}(\bm{S}^{P}) \frac{\partial \bm{S}^{P}}{\partial y} = 0,
\]
where $\bm{S}^{P}=(S_1,\dots,S_{P})^T$, and the Jacobian matrices are
\begin{align}
\label{eq:Jx_def}
& [\bm{J}_{x}(\bm{S}^{P})]_{kj} = \left\langle \left( \sum_{i=1}^{P}v^{(x)}_{i} \psi_i \right) f'\left(\sum_{j'=1}^{P}S_{j'} \psi_{j'} \right) \psi_{j}\psi_{k} \right\rangle, \\
\label{eq:Jy_def}
& [\bm{J}_{y}(\bm{S}^{P})]_{kj} = \left\langle \left( \sum_{i=1}^{P}v^{(y)}_{i} \psi_i \right) f'\left(\sum_{j'=1}^{P}S_{j'} \psi_{j'} \right) \psi_{j}\psi_{k} \right\rangle.
\end{align}
It follows from (\ref{eq:Jx_def}) and (\ref{eq:Jy_def}) that $\bm{J}_{x}$ and $\bm{J}_{y}$ are both symmetric. Thus, for any real constants $c_1,c_2$, the matrix $c_1 \bm{J}_{x}+c_2 \bm{J}_{y}$ is also symmetric and has an eigenvector decomposition with real eigenvalues. It follows that (\ref{eq:bl_sg_mat}) is hyperbolic for any $P\in \mathbb{N}_{0}^{+}$.
\end{proof}

%
%

\subsection{Pseudo-spectral Flux Approximation}
A numerical method for the stochastic Galerkin Buckley-Leverett problem requires the approximation of stochastic integrals over nonlinear functions of $\bm{S}^{P}$  (i.e., the matrices $\bm{J}_{x}(\bm{S}^{P})$ and $\bm{J}_{y}(\bm{S}^{P})$ defined in (\ref{eq:Jx_def}) and (\ref{eq:Jy_def})) at each spatial point and at each time step. 
Likewise, in order to evaluate the stochastic Galerkin flux function numerically, we also need to compute 
\begin{equation}
\label{eq:inpr_flux}
\bm{f}^{P}_{k} = \left\langle f(S) \psi_k\right\rangle,
\end{equation}
 where
\[
f(S) \approx \frac{ \left(\sum_{i=1}^{P}S_i\psi_i \right) \left(\sum_{k=1}^{P}S_k\psi_k \right)}{\left(\sum_{l=1}^{P}S_l\psi_l \right) \left(\sum_{m=1}^{P}S_m\psi_m \right) + a \left(1-\sum_{l=1}^{P}S_l\psi_l \right) \left(1- \sum_{m=1}^{P}S_m\psi_m  \right)}.
\]
In the remainder of this Section, we will only treat the $x$-dimension in the analysis, since the results for the $y$-dimension are analogous. Repeated calculations of (\ref{eq:inpr_flux}) are costly due to the stochastic integrals over rational expressions of the basis functions. Even though direct evaluation of these integrals through, say, a low-order quadrature rule may indeed be feasible to perform online, we would ideally like to avoid online numerical quadrature for the stochastic Galerkin system. To alleviate the computational cost, we introduce an approximation based on repeated application of the \textit{pseudo-spectral approximation} \citep{Debusschere}. The idea behind the pseudo-spectral approximation is to replace complex and costly stochastic integrals with a series of less complex and less costly stochastic integrals that can be pre-computed and used throughout the simulation. It will be practical to express algebraic operations on MW representations of stochastic variables as sequences of matrix-vector multiplications. For any functions $a(\boldsymbol{\xi}), b(\boldsymbol{\xi})$ of order $P$ of MW approximations in vector form, $\bm{a}^{P}, \bm{b}^{P} \in \mathbb{R}^{P}$, we define the matrices $\bm{A}(.)$ and $\bm{B}(.,.)$,
\begin{align}
\label{eq:a_def}
 [\bm{A}(\bm{a}^{P})]_{jk} &\equiv \sum_{i=1}^{P} \left\langle \psi_i \psi_j \psi_k \right\rangle
a_i, \quad j,k=1,\dots,P. \\
\label{eq:b_def}
 [\bm{B}(\bm{a}^{P}, \bm{b}^{P})]_{jk} &\equiv \sum_{h=1}^{P} \sum_{i=1}^{P}  \langle \psi_{h} \psi_{i} \psi_{j} \psi_{k}\rangle a_{h} b_{i}, \quad j,k=1,\dots,P. 
\end{align}
for inner triple and inner quadruple products of basis functions, respectively. For the computation of the MW coefficients of the product  $p(\xi)$ of two stochastic functions $a(\xi)$ and $b(\xi)$, we have
\[
p_{k} = \sum_{i=1}^{P} \sum_{j=1}^{P} \langle \psi_{i} \psi_{j} \psi_{k} \rangle a_{i} b_{j},
\]
or, in matrix-vector notation,
\[
\bm{p}^{P} = \bm{A}(\bm{a}^{P})\bm{b}^P.
\]
To compute the MW coefficients of the stochastic product of three functions,
\[
d(\xi)=a(\xi)b(\xi) c(\xi),
\]
we get, by projection onto the first $P$ basis functions,
\[
d_{k} = \sum_{h=1}^{P} \sum_{i=1}^{P} \sum_{j=1}^{P} \langle \psi_{h} \psi_{i} \psi_{j} \psi_{k} \rangle a_{h} b_{i} c_{j}, \quad k=1,...,P. 
\]
Equivalently, this product can be written in matrix-vector notation, where the vector of MW coefficients $\bm{d}^{P} = (d_{1},\dots,d_{P})^{T}$ are given by,
\[
\bm{d}^{P} = \bm{B}(\bm{a}^{P}, \bm{b}^{P}) \bm{c}^{P}.
\]

Using the matrix notation introduced above, the stochastic Galerkin flux function for the Buckley-Leverett problem is defined by the vector of MW coefficients $\bm{f}^{\scriptscriptstyle{P}}$ that satisfies the linear system
\begin{equation}
\label{eq:quad_sg_flux}
\left[ \bm{B}(\bm{S}^{P},\bm{S}^{P})+a \bm{B}(\bm{e}_{1}-\bm{S}^{\scriptscriptstyle{P}}, \bm{e}_{1} - \bm{S}^{\scriptscriptstyle{P}}) \right] \bm{f}^{\scriptscriptstyle{P}} = \bm{B}(\bm{S}^{P},\bm{S}^{P})\bm{u}^{\scriptscriptstyle{P}}.
\end{equation}
Note that the proof of hyperbolicity of Proposition~\ref{thm:class_hyp_pr} does not hold for the stochastic Galerkin flux \eqref{eq:quad_sg_flux} based on evaluation of precomputed stochastic inner products. Next, we will present proof of hyperbolicity applicable to this problem flux formulation.
\begin{thm}
\label{thm:hyperb_pssp}
Let $\bm{B}(\bm{S}^{P},\bm{S}^{P})+a \bm{B}(\bm{e}_{1}-\bm{S}^{\scriptscriptstyle{P}}, \bm{e}_{1} - \bm{S}^{\scriptscriptstyle{P}})$ be positive definite and let $P$ be any order of MW approximation. Then the stochastic Galerkin formulation with the flux (\ref{eq:quad_sg_flux}) is hyperbolic. 
\end{thm}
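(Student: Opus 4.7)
The plan is to recast the system in quasi-linear form,
\[
\partial_t \bm{S}^P + \bm{J}_x(\bm{S}^P)\,\partial_x \bm{S}^P + \bm{J}_y(\bm{S}^P)\,\partial_y \bm{S}^P = 0, \qquad \bm{J}_\alpha := \frac{\partial \bm{f}^P_{(\alpha)}}{\partial \bm{S}^P},
\]
where $\bm{f}^P_{(\alpha)}$ is the flux produced by (\ref{eq:quad_sg_flux}) using the velocity component $\bm{u}^P_{(\alpha)}$, and then to symmetrize by exploiting the positive-definite matrix
\[
\bm{M}(\bm{S}^P) := \bm{B}(\bm{S}^{P},\bm{S}^{P}) + a\,\bm{B}(\bm{e}_{1}-\bm{S}^{P}, \bm{e}_{1} - \bm{S}^{P}).
\]
Standard symmetric-hyperbolic theory guarantees that once I show $\bm{M}\bm{J}_x$ and $\bm{M}\bm{J}_y$ are symmetric, then for any real $c_1,c_2$ the pencil $c_1 \bm{J}_x + c_2 \bm{J}_y = \bm{M}^{-1}(c_1 \bm{M}\bm{J}_x + c_2 \bm{M}\bm{J}_y)$ has real eigenvalues and a complete $\bm{M}$-orthogonal eigenbasis, which is precisely hyperbolicity. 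Concretely, factoring $\bm{M} = \bm{L}\bm{L}^T$ by Cholesky makes this pencil similar, via $\bm{L}^T$, to the symmetric matrix $\bm{L}^{-1}(c_1 \bm{M}\bm{J}_x + c_2 \bm{M}\bm{J}_y)\bm{L}^{-T}$.

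The technical core is therefore to show that $\bm{M}\bm{J}_\alpha$ is symmetric. Implicit differentiation of the defining relation $\bm{M}(\bm{S}^P)\bm{f}^P_{(\alpha)} = \bm{B}(\bm{S}^P,\bm{S}^P)\bm{u}^P_{(\alpha)}$ with respect to $S_l$ gives, component-wise,
\[
[\bm{M}\bm{J}_\alpha]_{jl} = \sum_k \frac{\partial [\bm{B}(\bm{S}^P,\bm{S}^P)]_{jk}}{\partial S_l}\, u^{(\alpha)}_k \;-\; \sum_k \frac{\partial M_{jk}}{\partial S_l}\, f^{P}_{(\alpha),k}.
\]
Using $[\bm{B}(\bm{a},\bm{b})]_{jk} = \sum_{h,i}\langle\psi_h\psi_i\psi_j\psi_k\rangle\, a_h b_i$ and differentiating the two quadratic pieces of $\bm{M}$, every contribution to $[\bm{M}\bm{J}_\alpha]_{jl}$ collapses to a contraction of the shape $\sum_{i,k}\langle \psi_l \psi_i \psi_j \psi_k\rangle\,\beta_i \gamma_k$, with coefficients $\beta_i,\gamma_k$ built from $\bm{S}^P$, $\bm{u}^P_{(\alpha)}$, $\bm{f}^P_{(\alpha)}$, the constant $a$, and $\delta_{i,1}$, but in every case independent of the indices $j$ and $l$. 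Since $\langle \psi_h\psi_i\psi_j\psi_k\rangle$ is totally symmetric under permutations of its four indices (the basis functions commute inside the expectation), each such contraction is invariant under $j\leftrightarrow l$, so $\bm{M}\bm{J}_\alpha$ is symmetric, as required.

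The main obstacle I anticipate is pure bookkeeping rather than any conceptual difficulty. Both $\bm{B}(\bm{S}^P,\bm{S}^P)$ and $\bm{B}(\bm{e}_1-\bm{S}^P,\bm{e}_1-\bm{S}^P)$ depend on each $S_l$ in two slots, producing factors of two and, for the latter, additional $\delta_{i,1}$ terms from the constant shift $\bm{e}_1$; one must keep these straight when reassembling everything into the canonical $\langle\psi_l\psi_i\psi_j\psi_k\rangle$ form in which the $j\leftrightarrow l$ symmetry becomes manifest. Once that is done, the conclusion follows immediately from the symmetric-hyperbolic reduction sketched in the first paragraph, and the argument applies to any order $P$ since it uses only algebraic properties of the multiwavelet inner products.
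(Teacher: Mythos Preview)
Your proposal is correct and follows essentially the same route as the paper: both arguments show that the flux Jacobian factors as $\bm{M}^{-1}$ times a symmetric matrix (the paper identifies this symmetric factor explicitly as a sum of $\bm{B}$-matrices, whereas you verify symmetry of $\bm{M}\bm{J}_\alpha$ directly from the total symmetry of $\langle\psi_h\psi_i\psi_j\psi_k\rangle$), and then conclude via a symmetric square root / Cholesky similarity. Your treatment of the two-dimensional pencil $c_1\bm{J}_x+c_2\bm{J}_y$ with the common symmetrizer $\bm{M}$ is slightly more explicit than the paper, which restricts the proof to one spatial direction and leaves the other as analogous.
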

\begin{proof}
By the chain rule, for $k=1,\dots, P$,
\begin{multline}
\frac{\partial \bm{f}^{\scriptscriptstyle{P}}}{\partial S_{k} }= -2
\left[ \bm{B}(\bm{S}^{P},\bm{S}^{P})+a \bm{B}(\bm{e}_{1}-\bm{S}^{\scriptscriptstyle{P}}, \bm{e}_{1} - \bm{S}^{\scriptscriptstyle{P}}) \right]^{-1}  \\
\times
\left[
\bm{B}(\bm{S}^{P},\bm{e}_{k}) - a \bm{B}(\bm{e}_{1}-\bm{S}^{\scriptscriptstyle{P}}, \bm{e}_{k} )
\right] 
 \left[ \bm{B}(\bm{S}^{P},\bm{S}^{P})+a \bm{B}(\bm{e}_{1}-\bm{S}^{\scriptscriptstyle{P}}, \bm{e}_{1} - \bm{S}^{\scriptscriptstyle{P}}) \right]^{-1}  \\
 \times \bm{B}(\bm{S}^{P},\bm{S}^{P})\bm{u}^{\scriptscriptstyle{P}} 
+ 2 \left[ \bm{B}(\bm{S}^{P},\bm{S}^{P})+a \bm{B}(\bm{e}_{1}-\bm{S}^{\scriptscriptstyle{P}}, \bm{e}_{1} - \bm{S}^{\scriptscriptstyle{P}}) \right]^{-1} \bm{B}(\bm{S}^{P},\bm{u}^{\scriptscriptstyle{P}}) \bm{e}_{k}.
\end{multline}
Let $\bm{w}^{\scriptscriptstyle{P}} = \left[ \bm{B}(\bm{S}^{P},\bm{S}^{P})+a \bm{B}(\bm{e}_{1}-\bm{S}^{\scriptscriptstyle{P}}, \bm{e}_{1} - \bm{S}^{\scriptscriptstyle{P}}) \right]^{-1} \bm{B}(\bm{S}^{P},\bm{S}^{P})$. Then
\begin{multline}
\frac{\partial \bm{f}^{\scriptscriptstyle{P}}}{\partial \bm{S}^{\scriptscriptstyle{P}} } = 2\left[ \bm{B}(\bm{S}^{P},\bm{S}^{P})+a \bm{B}(\bm{e}_{1}-\bm{S}^{\scriptscriptstyle{P}}, \bm{e}_{1} - \bm{S}^{\scriptscriptstyle{P}}) \right]^{-1} \\
\times  \left[ \bm{B}(a(\bm{e}_{1}-\bm{S}^{P})-\bm{S}^{P},\bm{w}^{P})+ \bm{B}(\bm{S}^{\scriptscriptstyle{P}}, \bm{u}^{\scriptscriptstyle{P}}) \right].
\end{multline}
Thus, the flux Jacobian $\partial \bm{f}^{P}/\partial \bm{S}^{P}$ is a product of two symmetric matrices,
\begin{align}
& \bm{M}_{1}=2\left[ \bm{B}(\bm{S}^{P},\bm{S}^{P})+a \bm{B}(\bm{e}_{1}-\bm{S}^{\scriptscriptstyle{P}}, \bm{e}_{1} - \bm{S}^{\scriptscriptstyle{P}}) \right]^{-1},  \nonumber \\ 
& \bm{M}_{2}=\left[ \bm{B}(a(\bm{e}_{1}-\bm{S}^{P})-\bm{S}^{P},\bm{w}^{P})+ \bm{B}(\bm{S}^{\scriptscriptstyle{P}}, \bm{u}^{\scriptscriptstyle{P}}) \right]. \nonumber
\end{align}
By assumption, $\bm{M}_{1}$ is positive definite (since its inverse is positive definite), and there exists an invertible and symmetric square root matrix, denoted $\bm{G}^{1/2}$, that satisfies $\bm{G}^{1/2} \bm{G}^{1/2} = \bm{M}_{1}$.  
Now, $\partial \bm{f}^{P}/\partial \bm{S}^{P} = \bm{G}^{1/2} \bm{G}^{1/2} \bm{M}_{2}$ is similar to the symmetric matrix $\bm{G}^{-1/2} \bm{G}^{1/2} \bm{G}^{1/2} \bm{M}_{2} \bm{G}^{1/2}= \bm{G}^{1/2} \bm{M}_{2} \bm{G}^{1/2}$. Since $\partial \bm{f}^{P}/\partial \bm{S}^{P}$ is similar to a diagonalizable matrix with real eigenvalues, $\partial \bm{f}^{P}/\partial \bm{S}^{P}$ is also diagonalizable with real eigenvalues, hence the flux formulation \eqref{eq:quad_sg_flux} is hyperbolic.
\end{proof}

The evaluation of the matrix $\bm{B}$ through fourth-order tensors is relatively costly, especially in a setting where the number of stochastic dimensions is large. Alternatively, one may instead compute products of three functions by successively computing the MW coefficients of products of two stochastic functions, i.e. introducing the approximation
\[
\bm{B}(\bm{a}^{P}, \bm{b}^{P})\bm{c}^{P} \approx \bm{A}(\bm{p}^{P})\bm{c}^P \mbox{ where }\bm{p}^{P} = \bm{A}(\bm{a}^{P})\bm{b}^P.
\]
The successive application of the matrix $\bm{A}(.)$ instead of $\bm{B}(.,.)$ is computationally efficient but introduces a stochastic aliasing error.

As an alternative to \eqref{eq:quad_sg_flux}, we may compute the vector of flux MW coefficients $\bm{f}^{P}$ by solving the linear system
\begin{equation}
\label{eq:trip_sg_flux}
\left[  \bm{A}(\bm{S}^{P})\bm{S}^{P} + a \bm{A}(\bm{e}_{1}- \bm{S}^{P})  (\bm{e}_{1}- \bm{S}^{P})  \right]  \bm{f}^{P}= \bm{A}(\bm{S}^{P}) \bm{S}^{P}  
\end{equation}
The approximation (\ref{eq:trip_sg_flux}) is based on successive application of pairwise pseudo-spectral products in order to avoid computation of stochastic inner products of higher order. The flux function (\ref{eq:trip_sg_flux}) is a suitable approximation if the error is negligible in comparison with (\ref{eq:inpr_flux}). 

%
%

\section{Numerical Methods}
\label{sec:num_meth}

%
%

\subsection{Reduced-order Stochastic Galerkin Method}
\label{sec:adapt_red_order}
Due to discontinuities and other sharp features, an accurate representation of a hyperbolic PDE solution in general requires high-order MW expansion. For solutions of ODEs, at a given time certain parts of the stochastic domain more significantly affects the solution than other regions of the domain. A temporally evolving adaptive stochastic basis might be an efficient alternative for these problems~\citep{LeMaitre_etal_04}. For a fixed point in physical space, we can expect similar behavior of  a hyperbolic PDE solution: certain regions of stochastic space need a finer basis for accurate representation than others. However, for a different spatial location, a different region of stochastic space will significantly impact the solution features. We can localize the stochastic basis in space and time, but we must maintain coupling since the solution at any spatial point will be affected by any given MW mode at some point in time. This makes adaptive methods for MW solutions of PDEs difficult. In particular, for stochastic Galerkin methods it is non-trivial to adaptively modify the stochastic basis by modifying the large system matrices involved. As an example, if one wants to compute the flux between two spatially adjacent grid points that are represented by different basis functions, one needs to complement both bases with the missing functions from the adjacent grid points to be able to propagate the flux. After that, one needs to perform basis reduction to maintain adaptivity of the bases, otherwise nothing is gained in computational cost by localizing the basis functions.

To avoid the cumbersome procedure described above, we devise an adaptive stochastic Galerkin method in the following way. The set of all admissible stochastic basis functions is determined and the inner products corresponding to the matrices $\bm{A}$ and $\bm{B}$ in~\eqref{eq:a_def} and~\eqref{eq:b_def} are computed and stored. For any given point $(x_{j},t_{n})$ in space and time, the computed solution vector $\bm{S}^{P}$ may have negligible entries that can be ignored in the flux evaluation. More generally, assuming that we want to perform a pseudospectral multiplication involving three MW vectors $\bm{a}^{P}$, $\bm{b}^{P}$, and $\bm{c}^{P}$, we first determine their local representations by only retaining the MW coefficients that are significant, i.e., greater in absolute value than some prescribed small threshold value $\epsilon$. We identify the index set $J_{\bm{a}}=\{j: |a_{j}|>\epsilon \}$ and set $P_{\bm{a}}=|J_{\bm{a}}|$. Similarly, reduced basis index sets for all MW expansions involved are identified at a cost that is linear in the order of MW expansion. The vectors $\bm{a}^{P}$, $\bm{b}^{P}$, and $\bm{c}^{P}$ are thus replaced by $\bm{a}^{P_{\bm{a}}}$, $\bm{b}^{P_{\bm{b}}}$, and $\bm{c}^{P_{\bm{c}}}$ where $P_{\bm{a}}, P_{\bm{b}}, P_{\bm{c}} \leq P$. At a reduced cost, we may now approximate the matrices $\bm{A}$ and $\bm{B}$,
\[
[\bm{A}(\bm{a}^{P})]_{jk} \approx [\tilde{\bm{A}}(\bm{a}^{P_{\bm{a}}})]_{jk} \equiv \sum_{i \in J_{\bm{a}}} \left\langle \psi_{i}\psi_{j}\psi_{k} \right\rangle a_{i},  \quad j,k =1,\dots, P,
\]
\[
[\bm{B}(\bm{a}^{P}, \bm{b}^{P})]_{jk} \approx [\tilde{\bm{B}}(\bm{a}^{P_{\bm{a}}}, \bm{b}^{P_{\bm{b}}})]_{jk} \equiv \sum_{h \in J_{\bm{a}}, i \in J_{\bm{b}}} \left\langle \psi_{h} \psi_{i}\psi_{j}\psi_{k} \right\rangle a_{h}b_{i},  \quad j,k =1,\dots, P.
\]
The matrices $\tilde{\bm{A}}(\bm{a}^{P_{\bm{a}}})$ and $\tilde{\bm{B}}(\bm{a}^{P_{\bm{a}}})$ are of the same size $P\times P$  as $\bm{A}(\bm{a}^{P})$ and $\bm{B}(\bm{a}^{P}, \bm{b}^{P})$, but they are formed using fewer floating-points operations.
These matrices are used to compute stochastic products via the matrix-vector products 
\begin{align}
\bm{A}(\bm{a}^{P})\bm{b}^{P} &\approx \tilde{\bm{A}}(\bm{a}^{P_{\bm{a}}})\bm{b}^{P}, \label{eq:A_red} \\
\bm{B}(\bm{a}^{P},\bm{b}^{P})\bm{c}^{P} &\approx \tilde{\bm{B}}(\bm{a}^{P_{\bm{a}}},\bm{b}^{P_{\bm{b}}})\bm{c}^{P}.\label{eq:B_red}
\end{align}
Further cost reduction in the evaluations of~\eqref{eq:A_red}-\eqref{eq:B_red} is possible by using the sparsity in $\bm{b}^{P}$ in~\eqref{eq:A_red} and the sparsity in $\bm{c}^{P}$  in~\eqref{eq:B_red}. In~\eqref{eq:A_red}, non-zero columns of $\tilde{\bm{A}}(\bm{a}^{P_{\bm{a}}})$ that are multiplied with almost-zero entries of $\bm{b}^{P}$ have negligible contribution to the matrix-vector product and can be omitted without loss of accuracy. Similarly, column vectors of $\tilde{\bm{B}}(\bm{a}^{P_{\bm{a}}},\bm{b}^{P_{\bm{b}}})$ that are multiplied by almost-zero entries of $\bm{c}^{P}$ in~\eqref{eq:B_red} can also be omitted. This is achieved by introducing reduced-size matrices
\begin{align}
 \tilde{\bm{A}}(\bm{a}^{P_{\bm{a}}})\bm{b}^{P}  &\approx \hat{\bm{A}}(\bm{a}^{P_{\bm{a}}},P_{\bm{b}})\bm{b}^{P_{\bm{b}}} \label{eq:A_red_red}, \\
 \tilde{\bm{B}}(\bm{a}^{P_{\bm{a}}},\bm{b}^{P_{\bm{b}}})\bm{c}^{P} &\approx \hat{\bm{B}}(\bm{a}^{P_{\bm{a}}},\bm{b}^{P_{\bm{b}}},P_{\bm{c}})\bm{c}^{P_{\bm{c}}},
\end{align}
where $\hat{\bm{A}}(\bm{a}^{P_{\bm{a}}},P_{\bm{b}}) \in \mathbbm{R}^{P \times P_{\bm{b}}}$ and $\hat{\bm{B}}(\bm{a}^{P_{\bm{a}}},\bm{b}^{P_{\bm{b}}},P_{\bm{c}}) \in \mathbbm{R}^{P \times P_{\bm{c}}}$. 
The construction of the reduced-cost matrix-vector products is systematic and efficient since it only involves a direct (linear) search of the vectors $\bm{a}^{P}$, $\bm{b}^{P}$, and $\bm{c}^{P}$ to find the index sets $J_{\bm{a}}$, $J_{\bm{b}}$, and $J_{\bm{c}}$. There is no need to search the matrices for negligible elements. It is possible to improve on the cost of searching the vectors for significant MW entries by exploiting the hierarchical structures of wavelets. If the search is performed from coarse levels to fine levels of stochastic partitioning and a wavelet is found to have insignificant impact, then one may terminate the  procedure since the descendants of that wavelet will also be negligible. This strategy is of particular interest in stochastic representations with a large number of wavelet levels but will not be further considered in this work.

Note that due to the space-time localization of hyperbolic problems (traveling waves), by setting a very small threshold $\epsilon$ the approximation is essentially exact, but we nevertheless obtain a substantial cost reduction by avoiding costly operations that have almost zero contribution. This strategy is pursued in the numerical experiments and no reduction in accuracy is observed - only in computational time - by using the locally reduced-order basis.

\subsection{Discretization of the Stochastic Galerkin System}
The reduced-order stochastic Galerkin method can be implemented with any numerical method that can capture the highly discontinuous solution of the extended nonlinear system of hyperbolic PDEs. We use a Riemann solver with the HLL (after Harten, Lax and van Leer) flux introduced in \citep{Harten_etal_83} and further developed by \citep{Einfeld:1988}. The HLL solver relies on estimates of the fastest velocities, $\sigma_L$ and $\sigma_R$, i.e. the estimated minimum and maximum eigenvalues of the Jacobian $\bm{J}^{P}=\partial \bm{f}^{P}/\partial \bm{S}^{P}$ of the flux. Kurganov et al. presented a class of central-upwind schemes where their second-order accurate polynomial reconstruction is very similar to the HLL flux \citep{Kurganov_etal_01}. The numerical flux function we use can thus also be labeled a central-upwind flux.

At the interface between the cells $j$ and $j+1$ in a single dimension, the vector valued HLL flux is defined by
\begin{equation}
\label{eq:hll_flux}
F_{j+\frac{1}{2}}=\left\{
\begin{array}{ll}
\bm{f}^{P} \left(\bm{S}^L_{j+\frac{1}{2}} \right) & \mbox{if } \sigma_L \geq 0\\
\\
\frac{\sigma_R \bm{f}^{P} \left(\bm{S}^L_{j+\frac{1}{2}} \right)-\sigma_L \bm{f}^{P} \left(\bm{S}^R_{j+\frac{1}{2}} \right)+\sigma_L \sigma_R \left(\bm{S}^R_{j+\frac{1}{2}}-\bm{S}^L_{j+\frac{1}{2}}\right)}{\sigma_R-\sigma_L} & \mbox{if }  \sigma_L < 0 < \sigma_R\\
\\
\bm{f}^{P}\left(\bm{S}^R_{j+\frac{1}{2}}\right) & \mbox{if }  S_R \leq 0
\end{array}
\right.,
\end{equation}
where $\bm{S}^L_{j+\frac{1}{2}}$ and $\bm{S}^R_{j+\frac{1}{2}}$ denote flux-limited left and right states, respectively. In the numerical experiments, we use the the minmod limiter since more diffusive flux limiters might lead to failure in resolving composite waves for non-convex flux functions~\citep{Kurganov_etal_07}. In general, obtaining accurate eigenvalue estimates may be computationally costly.
Note that in the deterministic case, we expect $f'(S)\geq 0$, and the scheme becomes an upwind scheme, since this implies $\sigma_L \geq 0$. However, this is not necessarily the case in the stochastic Galerkin setting.

The generalization to two dimensions coincides with the dimension-by-dimension application of the one-dimensional flux function. It should be noted, however, that the derivation of the scheme is genuinely multi-dimensional. It has been proven to satisfy a discrete maximum principle subject to a time-step restriction in two dimensions together with the minmod flux limiter \citep{Kurganov_etal_01}. This is an important property due to the fact that in many cases dimension-by-dimension treatment of the flux limiters lead to spurious oscillations of the solution \citep{Suresh_00} and we do indeed encounter them when the discrete maximum principle in \citep{Kurganov_etal_01} is not fulfilled. The time-step restriction due to the discrete maximum principle is more strict than the CFL (Courant-Friedrichs-Lewy) condition that is necessary but not sufficient to ensure stability for the Runge-Kutta method used for the time integration.


\section{Numerical Results}
\label{sec:num_res}

\subsection{Single Spatial Dimension and Stochastic Velocity}
To study numerical convergence, we first present an example in one spatial dimension and a single stochastic variable. Let the velocity $u$ be spatially constant (consistent with the incompressibility assumption) and uniformly distributed, $u\in \mathcal{U}[u_{min}, u_{max}]$, $u_{min}=0.8,u_{max}=1.2$, and consider a Riemann problem with $S=1$ for $x\leq 0$, $S=0$ for $x>0$. 
Since $u$ is random, $S$ is also random. However, the randomness in $u$ does not affect the value $S_*$ of the water saturation at the shock. Therefore, the shock location of the stochastic problem can be found by analysis of the characteristics,
\[
x_{shock} = x_{0}+ u(\xi)f'(S_{*})t,
\]
where $x_0=0$ is the initial location of the discontinuity. Since the velocity $u$ is restricted to a finite interval, we expect shocks of the realizations of the solution to be confined to the interval $[u_{min} f'(S_*)t, u_{max}f'(S_*)t]$. The stochastic Galerkin approximation of the original stochastic problem should reflect this fact. Figure \ref{fig:bl_1dl_P_2_4_8} depicts the mean value for different orders of piecewise constant wavelets, $P=4$ (left), $P=8$ (middle) and $P=16$ (right). 
The multiple discontinuities are located within the interval $[0.022, 0.033]$ corresponding to the spread of the shocks of the original problem, as seen in Figure \ref{fig:bl_1dl_mean_and_var}.

\begin{figure}[H]
\centering
\subfigure[$P=4$]
{\includegraphics[width=0.32\textwidth]{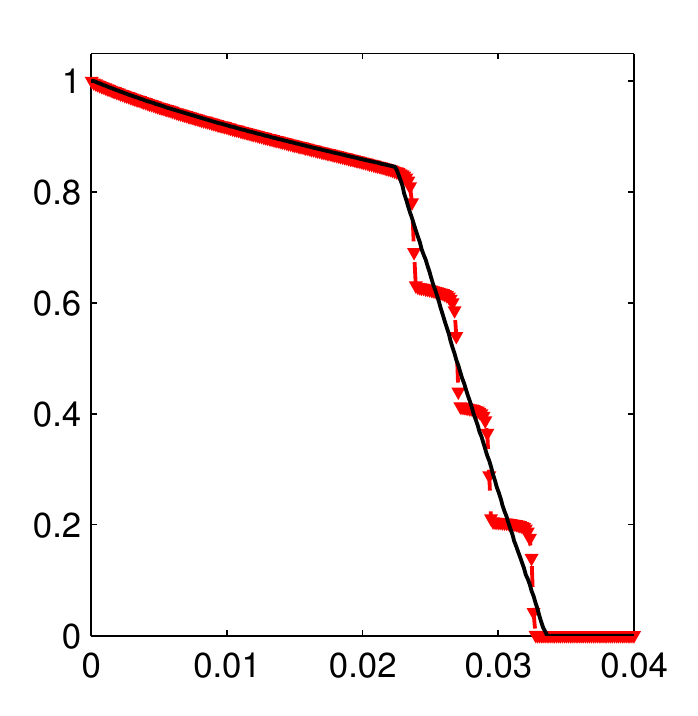}}
\subfigure[$P=8$]
{\includegraphics[width=0.32\textwidth]{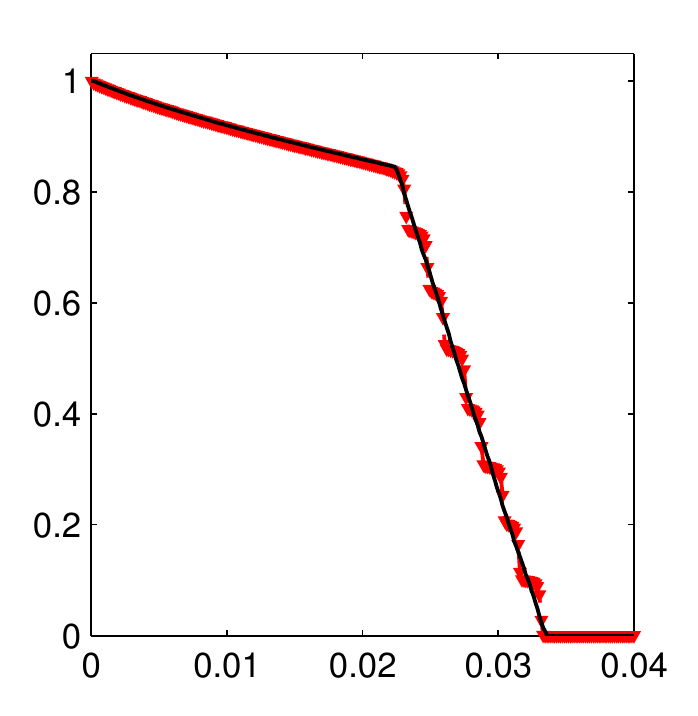}}
\subfigure[$P=16$]
{\includegraphics[width=0.32\textwidth]{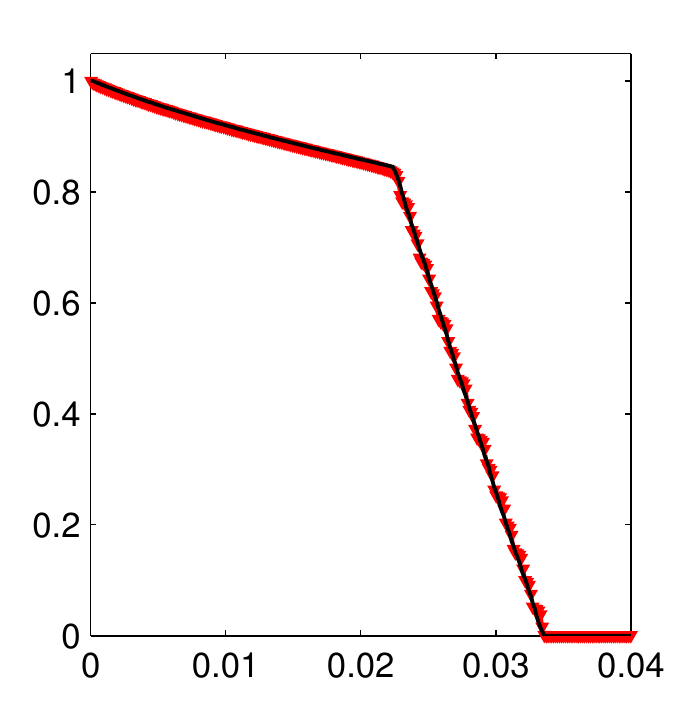}}
	\caption{Mean value of $S$ at $t=0.025$, 300 spatial points, $a=2$. The black curve is the reference Monte Carlo solution.}
	\label{fig:bl_1dl_P_2_4_8}
\end{figure}

\begin{figure}[H]
\centering
\subfigure[Mean values and standard deviations for different orders of MW expansion ($P$).]
{\includegraphics[width=0.48\textwidth]{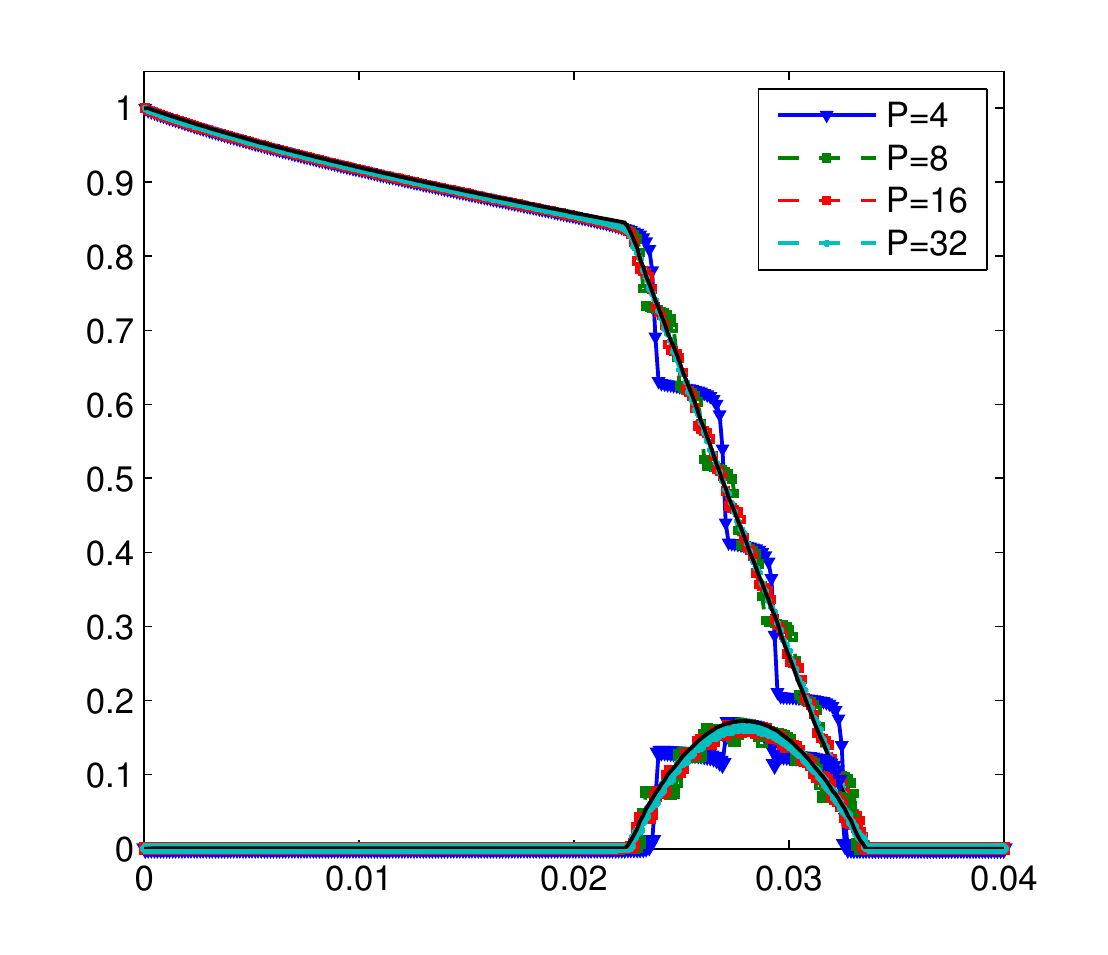}}
\subfigure[Zoom of left figure.]
{\includegraphics[width=0.48\textwidth]{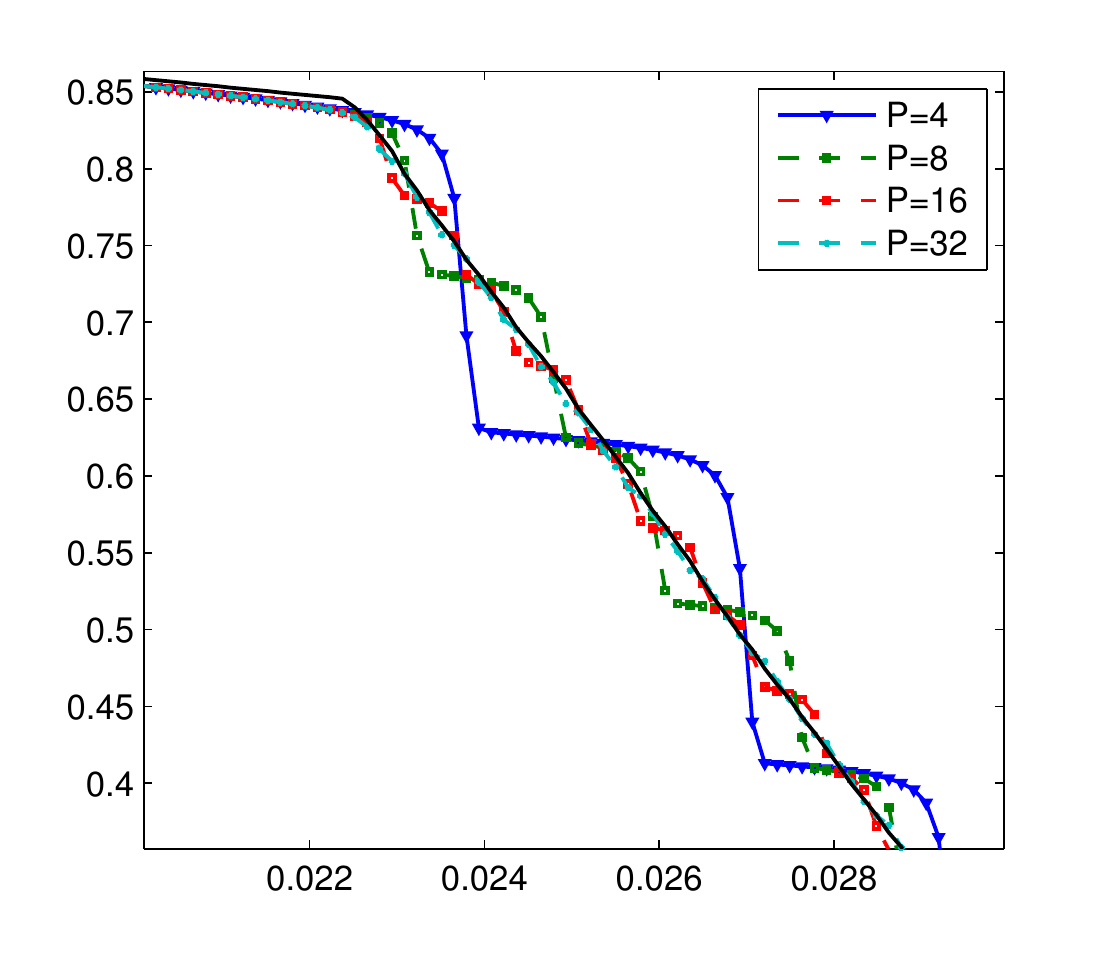}}
	\caption{Mean value and standard deviation of $S$ at $t=0.025$, 300 spatial points, $a=2$. Piecewise linear  wavelets. Also shown is the reference Monte Carlo solution (black).}
	\label{fig:bl_1dl_mean_and_var}
\end{figure}

The numerical convergence in comparison to a Monte Carlo reference solution of the single-dimensional case was investigated for various values of the viscosity ratio in \citep{Pettersson_Tchelepi_14}. It was found that multiwavelets with relatively low-order piecewise polynomial were appropriate to represent the solution. The piecewise linear and quadratic wavelets employed in the numerical results below do not exhibit the oscillations around discontinuities observed when using higher order polynomial representation. Such oscillations are important to avoid since they may lead to unphysical solution values (e.g. negative saturation) and breakdown of the numerical method.

The MW approximation deteriorates over time due to the growth of the higher order MW coefficients which makes the stochastic truncation error more severe with time~\citep{Wan_Karniadakis_05_b}. An interesting phenomenon related to this fact has been noted in the numerical results. If the stochastic truncation error grows sufficiently large, the lower-order MW coefficients grow in unexpected ways. This is shown in Figure~\ref{fig:basis_comp_1D} (a) where a $P=6$ ($N_p=2, N_r=1$) order wavelet basis has been used. The HLL solver yields a mean solution that is not monotone and therefore evidently a poor approximation. For comparison, the solution using the more diffusive Lax-Friedrichs flux with minmod flux limiters yields a monotone solution and might appear as a better choice in this case. However, this flux function can never capture the true solution unless the mesh is excessively fine. In contrast, if the order of wavelet refinement is increased as in Figure~\ref{fig:basis_comp_1D} (b) where a $P=24$ ($N_p=2, N_r=3$) order basis is used, the HLL solver performs well and captures the statistics of the solution. In addition, with a MW basis with sufficient piecewise polynomial order, the discontinuities seen in Figure~\ref{fig:bl_1dl_mean_and_var} are no longer visible in Figure~\ref{fig:basis_comp_1D} (b). We will therefore use the HLL solver in the rest of the manuscript and make sure that the stochastic basis employed is rich enough not to result in unphysical solutions similar to those shown in Figure~\ref{fig:basis_comp_1D} (a).

\begin{figure}[H]
\centering
\subfigure[ $N_{p}=2$.]
{\includegraphics[width=0.48\textwidth]{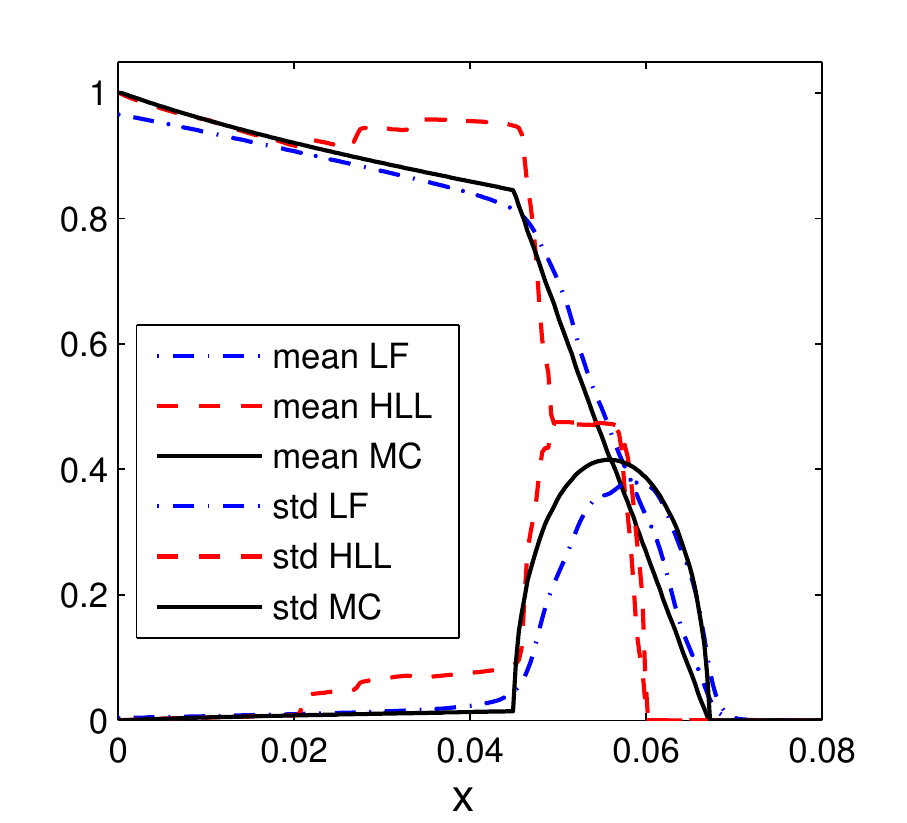}}
\subfigure[$N_{p}=3$.]
{\includegraphics[width=0.48\textwidth]{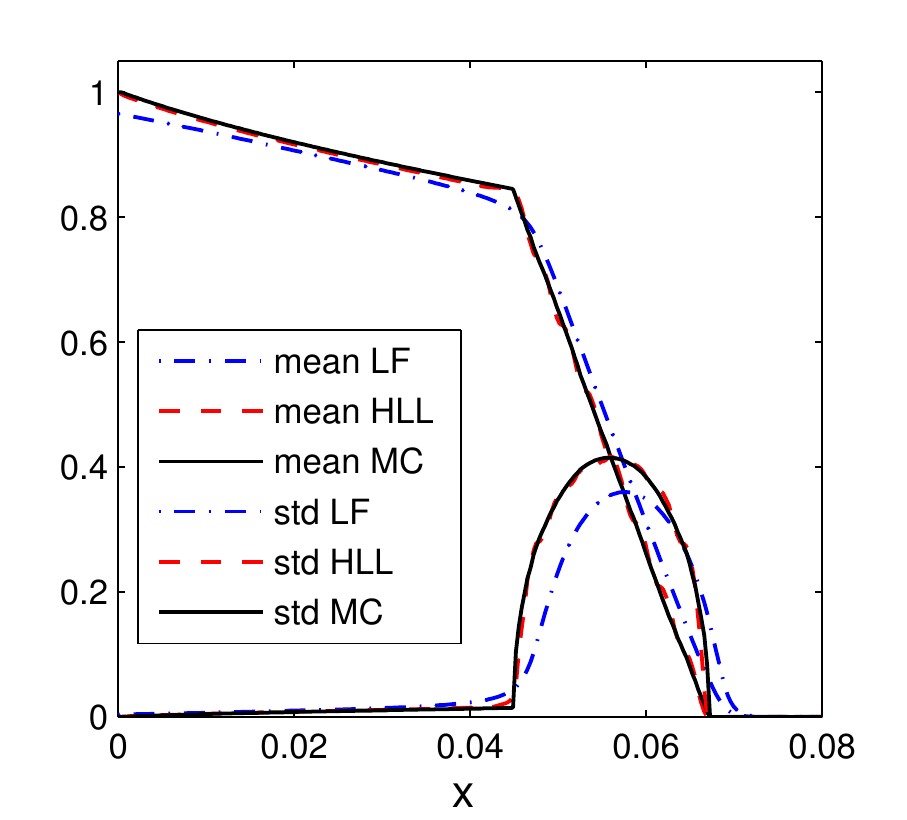}}
	\caption{The mean and standard deviation (std) of the sauration at time 0.05 for different stochastic bases and for the HLL and Lax-Friedrichs (LF) flux, respectively.}
	\label{fig:basis_comp_1D}
\end{figure}


The system size grows linearly in the MW expansion order $P$ and for practical applications it is essential to alleviate the computational cost. Figure~\ref{fig:temp_comp_1D} depicts the simulation times as a function of the order of MW expansion. Piecewise linear and piecewise quadratic wavelets are used as basis functions and we compare the solution of the full system with the adaptive reduced-order model described in Section~\ref{sec:adapt_red_order}. For large $P$, the gain in computational cost is more than an order of magnitude. 
\begin{figure}[H]
\centering
\subfigure[ $N_{p}=2$.]
{\includegraphics[width=0.48\textwidth]{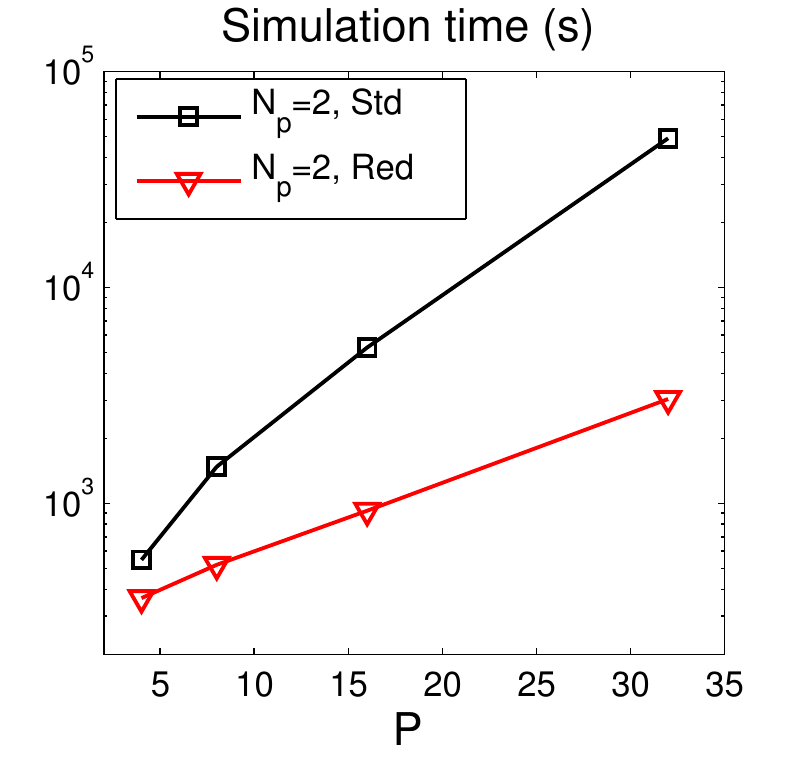}}
\subfigure[$N_{p}=3$.]
{\includegraphics[width=0.48\textwidth]{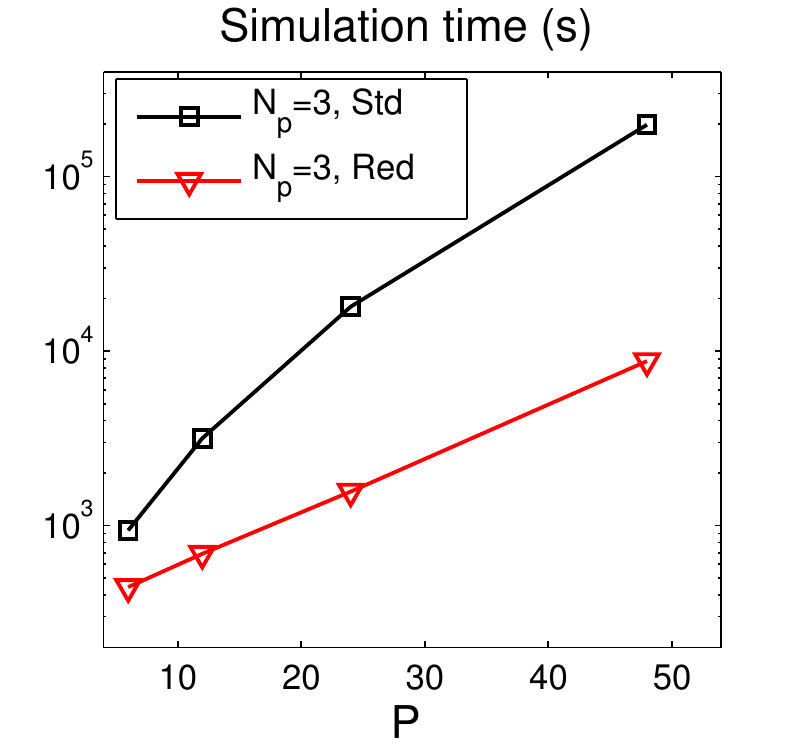}}
	\caption{Simulation times for the standard full-order (Std) and the reduced-order (Red) stochastic Galerkin formulations as a function of the total order of MW representation.}
	\label{fig:temp_comp_1D}
\end{figure}



\setlength{\unitlength}{0.032cm}

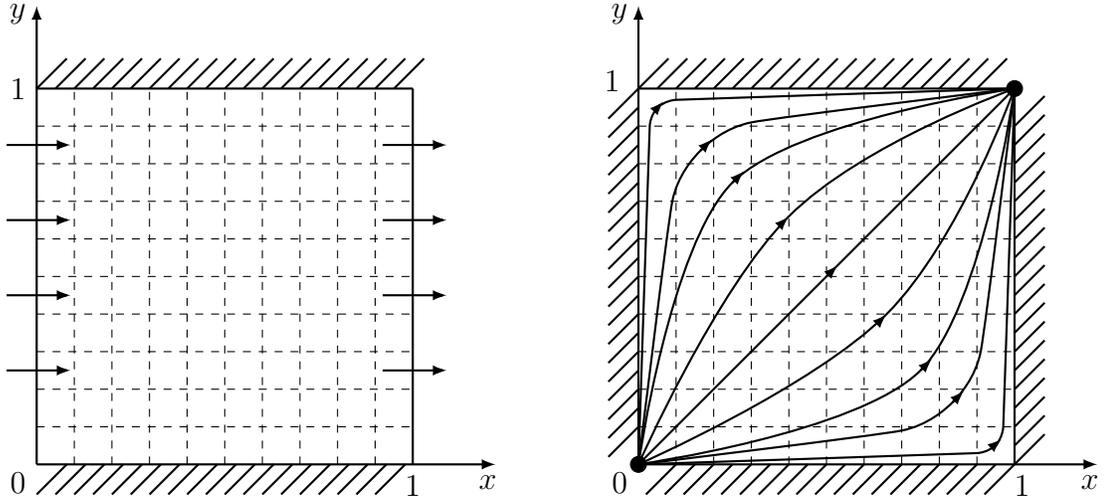
\begin{figure}[H] 
\begin{center}
\begin{tikzpicture}


\draw[thick] (0,0) -- (0,6);
\draw [-latex, black, thick, shorten >= 0.90cm] (0,6) -- (0,7);

\draw[thick] (0,0) -- (6,0);
\draw [-latex, black, thick, shorten >= 0.90cm] (6,0) -- (7,0);

\draw[thin, dashed] (0.5,0) -- (0.5,5);
\draw[thin, dashed] (1.0,0) -- (1.0,5);
\draw[thin, dashed] (1.5,0) -- (1.5,5);
\draw[thin, dashed] (2.0,0) -- (2.0,5);
\draw[thin, dashed] (2.5,0) -- (2.5,5);
\draw[thin, dashed] (3.0,0) -- (3.0,5);
\draw[thin, dashed] (3.5,0) -- (3.5,5);
\draw[thin, dashed] (4.0,0) -- (4.0,5);
\draw[thin, dashed] (4.5,0) -- (4.5,5);
\draw[thick] (5,0) -- (5,5);

\draw[thin, dashed] (0,0.5) -- (5,0.5);
\draw[thin, dashed] (0,1.0) -- (5,1.0);
\draw[thin, dashed] (0,1.5) -- (5,1.5);
\draw[thin, dashed] (0,2.0) -- (5,2.0);
\draw[thin, dashed] (0,2.5) -- (5,2.5);
\draw[thin, dashed] (0,3.0) -- (5,3.0);
\draw[thin, dashed] (0,3.5) -- (5,3.5);
\draw[thin, dashed] (0,4.0) -- (5,4.0);
\draw[thin, dashed] (0,4.5) -- (5,4.5);
\draw[thick] (0,5) -- (5,5);

\node [below, left] at (0,-0.25) {$0$};
\node [below] at (5,0) {$1$};
\node [left] at (0,5) {$1$};
\node [below] at (6,0) {$x$};
\node [left] at (0,6) {$y$};

\draw[thick] (-0.4,1.25) -- (0.4,1.25);
\draw [-latex, black, thick, shorten >= 0.95cm] (0.4,1.25) -- (1.4,1.25);
\draw[thick] (-0.4,2.25) -- (0.4,2.25);
\draw [-latex, black, thick, shorten >= 0.95cm] (0.4,2.25) -- (1.4,2.25);
\draw[thick] (-0.4,3.25) -- (0.4,3.25);
\draw [-latex, black, thick, shorten >= 0.95cm] (0.4,3.25) -- (1.4,3.25);
\draw[thick] (-0.4,4.25) -- (0.4,4.25);
\draw [-latex, black, thick, shorten >= 0.95cm] (0.4,4.25) -- (1.4,4.25);
\draw[thick] (4.6,1.25) -- (5.4,1.25);
\draw [-latex, black, thick, shorten >= 0.95cm] (5.4,1.25) -- (6.4,1.25);
\draw[thick] (4.6,2.25) -- (5.4,2.25);
\draw [-latex, black, thick, shorten >= 0.95cm] (5.4,2.25) -- (6.4,2.25);
\draw[thick] (4.6,3.25) -- (5.4,3.25);
\draw [-latex, black, thick, shorten >= 0.95cm] (5.4,3.25) -- (6.4,3.25);
\draw[thick] (4.6,4.25) -- (5.4,4.25);
\draw [-latex, black, thick, shorten >= 0.95cm] (5.4,4.25) -- (6.4,4.25);

%
%
\draw[thick] (0.0,5) -- (0.4,5.4);
\draw[thick] (0.25,5) -- (0.65,5.4);
\draw[thick] (0.5,5) -- (0.9,5.4);
\draw[thick] (0.75,5) -- (1.15,5.4);
\draw[thick] (1.0,5) -- (1.4,5.4);
\draw[thick] (1.25,5) -- (1.65,5.4);
\draw[thick] (1.5,5) -- (1.9,5.4);
\draw[thick] (1.75,5) -- (2.15,5.4);
\draw[thick] (2.0,5) -- (2.4,5.4);
\draw[thick] (2.25,5) -- (2.65,5.4);
\draw[thick] (2.5,5) -- (2.9,5.4);
\draw[thick] (2.75,5) -- (3.15,5.4);
\draw[thick] (3.0,5) -- (3.4,5.4);
\draw[thick] (3.25,5) -- (3.65,5.4);
\draw[thick] (3.5,5) -- (3.9,5.4);
\draw[thick] (3.75,5) -- (4.15,5.4);
\draw[thick] (4.0,5) -- (4.4,5.4);
\draw[thick] (4.25,5) -- (4.65,5.4);
\draw[thick] (4.5,5) -- (4.9,5.4);
\draw[thick] (4.75,5) -- (5.15,5.4);

%
%
\draw[thick] (0.0,-0.4) -- (0.4,0);
\draw[thick] (0.25,-0.4) -- (0.65,0);
\draw[thick] (0.5,-0.4) -- (0.9,0);
\draw[thick] (0.75,-0.4) -- (1.15,0);
\draw[thick] (1.0,-0.4) -- (1.4,0);
\draw[thick] (1.25,-0.4) -- (1.65,0);
\draw[thick] (1.5,-0.4) -- (1.9,0);
\draw[thick] (1.75,-0.4) -- (2.15,0);
\draw[thick] (2.0,-0.4) -- (2.4,0);
\draw[thick] (2.25,-0.4) -- (2.65,0);
\draw[thick] (2.5,-0.4) -- (2.9,0);
\draw[thick] (2.75,-0.4) -- (3.15,0);
\draw[thick] (3.0,-0.4) -- (3.4,0);
\draw[thick] (3.25,-0.4) -- (3.65,0);
\draw[thick] (3.5,-0.4) -- (3.9,0);
\draw[thick] (3.75,-0.4) -- (4.15,0);
\draw[thick] (4.0,-0.4) -- (4.4,0);
\draw[thick] (4.25,-0.4) -- (4.65,0);
\draw[thick] (4.5,-0.4) -- (4.9,0);

%
%


\draw[thick] (8,0) -- (8,6);
\draw [-latex, black, thick, shorten >= 0.90cm] (8,6) -- (8,7);

\draw[thick] (8,0) -- (14,0);
\draw [-latex, black, thick, shorten >= 0.90cm] (14,0) -- (15,0);

\draw[thin, dashed] (8.5,0) -- (8.5,5);
\draw[thin, dashed] (9.0,0) -- (9.0,5);
\draw[thin, dashed] (9.5,0) -- (9.5,5);
\draw[thin, dashed] (10.0,0) -- (10.0,5);
\draw[thin, dashed] (10.5,0) -- (10.5,5);
\draw[thin, dashed] (11.0,0) -- (11.0,5);
\draw[thin, dashed] (11.5,0) -- (11.5,5);
\draw[thin, dashed] (12.0,0) -- (12.0,5);
\draw[thin, dashed] (12.5,0) -- (12.5,5);
\draw[thick] (13,0) -- (13,5);

\draw[thin, dashed] (8,0.5) -- (13,0.5);
\draw[thin, dashed] (8,1.0) -- (13,1.0);
\draw[thin, dashed] (8,1.5) -- (13,1.5);
\draw[thin, dashed] (8,2.0) -- (13,2.0);
\draw[thin, dashed] (8,2.5) -- (13,2.5);
\draw[thin, dashed] (8,3.0) -- (13,3.0);
\draw[thin, dashed] (8,3.5) -- (13,3.5);
\draw[thin, dashed] (8,4.0) -- (13,4.0);
\draw[thin, dashed] (8,4.5) -- (13,4.5);
\draw[thick] (8,5) -- (13,5);

\node [below, left] at (8,-0.25) {$0$};
\node [below] at (13.1,0) {$1$};
\node [below] at (14,0) {$x$};
\node [left] at (7.9,5.1) {$1$};
\node [left] at (8,6) {$y$};


\draw[thick] plot [smooth,tension=1.5] coordinates{(8,0) (10.5,2.5)}; 
\draw[thick] plot [smooth,tension=1.5] coordinates{(10.5,2.5) (13,5)};
\draw [-latex, black, thick, shorten >= 0.50cm] (10.5,2.5) -- (11,3);

\draw[thick] plot [smooth,tension=0.7] coordinates{(8,0) (11.3,2) (13,5)};
\draw[thick] plot [smooth,tension=0.7] coordinates{(8,0) (10,3.3) (13,5)};
\draw [-latex, black, thick, shorten >= 0.15cm] (11.2,1.9) -- (11.4,2.1);
\draw [-latex, black, thick, shorten >= 0.15cm] (9.9,3.2) -- (10.1,3.4);

\draw[thick] plot [smooth,tension=0.6] coordinates{(8,0) (11.75,1.25) (13,5)};
\draw[thick] plot [smooth,tension=0.6] coordinates{(8,0) (9.25,3.75) (13,5)};
\draw [-latex, black, thick, shorten >= 0.00cm] (11.75,1.25) -- (11.90,1.40);
\draw [-latex, black, thick, shorten >= 0.00cm] (9.25,3.75) -- (9.40,3.90);

\draw[thick] plot [smooth,tension=0.3] coordinates{(8,0) (11.5,0.45) (12.175,0.825) (12.55,1.5) (13,5)};
\draw[thick] plot [smooth,tension=0.3] coordinates{(8,0) (8.45,3.5) (8.825,4.175) (9.5,4.55) (13,5)};
\draw [-latex, black, thick, shorten >= 0.00cm] (12.175,0.825) -- (12.325,0.975);
\draw [-latex, black, thick, shorten >= 0.00cm] (8.825,4.175) -- (8.975,4.325);

\draw[thick] plot [smooth,tension=0.1] coordinates{(8,0) (12.5,0.15) (12.75,0.25) (12.85,0.5) (13,5)};
\draw[thick] plot [smooth,tension=0.1] coordinates{(8,0) (8.15,4.5) (8.25,4.75) (8.5,4.85) (13,5)};
\draw [-latex, black, thick, shorten >= 0.10cm] (12.69,0.22) -- (12.90,0.40);
\draw [-latex, black, thick, shorten >= 0.10cm] (8.25,4.74) -- (8.40,4.90);

%
%
\draw[thick] (8.1,-0.4) -- (8.5,0);
\draw[thick] (8.35,-0.4) -- (8.75,0);
\draw[thick] (8.6,-0.4) -- (9.0,0);
\draw[thick] (8.85,-0.4) -- (9.25,0);
\draw[thick] (9.1,-0.4) -- (9.5,0);
\draw[thick] (9.35,-0.4) -- (9.75,0);
\draw[thick] (9.6,-0.4) -- (10.0,0);
\draw[thick] (9.85,-0.4) -- (10.25,0);
\draw[thick] (10.1,-0.4) -- (10.5,0);
\draw[thick] (10.35,-0.4) -- (10.75,0);
\draw[thick] (10.6,-0.4) -- (11.0,0);
\draw[thick] (10.85,-0.4) -- (11.25,0);
\draw[thick] (11.1,-0.4) -- (11.5,0);
\draw[thick] (11.35,-0.4) -- (11.75,0);
\draw[thick] (11.6,-0.4) -- (12.0,0);
\draw[thick] (11.85,-0.4) -- (12.25,0);
\draw[thick] (12.1,-0.4) -- (12.5,0);
\draw[thick] (12.35,-0.4) -- (12.75,0);
\draw[thick] (12.6,-0.4) -- (13.0,0);

%
\draw[thick] (13.0,0) -- (13.4,0.4);
\draw[thick] (13.0,0.25) -- (13.4,0.65);
\draw[thick] (13.0,0.5) -- (13.4,0.9);
\draw[thick] (13.0,0.75) -- (13.4,1.15);
\draw[thick] (13.0,1.0) -- (13.4,1.4);
\draw[thick] (13.0,1.25) -- (13.4,1.65);
\draw[thick] (13.0,1.5) -- (13.4,1.9);
\draw[thick] (13.0,1.75) -- (13.4,2.15);
\draw[thick] (13.0,2.0) -- (13.4,2.4);
\draw[thick] (13.0,2.25) -- (13.4,2.65);
\draw[thick] (13.0,2.5) -- (13.4,2.9);
\draw[thick] (13.0,2.75) -- (13.4,3.15);
\draw[thick] (13.0,3.0) -- (13.4,3.4);
\draw[thick] (13.0,3.25) -- (13.4,3.65);
\draw[thick] (13.0,3.5) -- (13.4,3.9);
\draw[thick] (13.0,3.75) -- (13.4,4.15);
\draw[thick] (13.0,4.0) -- (13.4,4.4);
\draw[thick] (13.0,4.25) -- (13.4,4.65);
\draw[thick] (13.0,4.5) -- (13.4,4.9);

%
%
\draw[thick] (8.0,5.0) -- (8.4,5.4);
\draw[thick] (8.25,5.0) -- (8.65,5.4);
\draw[thick] (8.5,5.0) -- (8.9,5.4);
\draw[thick] (8.75,5.0) -- (9.15,5.4);
\draw[thick] (9.0,5.0) -- (9.4,5.4);
\draw[thick] (9.25,5.0) -- (9.65,5.4);
\draw[thick] (9.5,5.0) -- (9.9,5.4);
\draw[thick] (9.75,5.0) -- (10.15,5.4);
\draw[thick] (10.0,5.0) -- (10.4,5.4);
\draw[thick] (10.25,5.0) -- (10.65,5.4);
\draw[thick] (10.5,5.0) -- (10.9,5.4);
\draw[thick] (10.75,5.0) -- (11.15,5.4);
\draw[thick] (11.0,5.0) -- (11.4,5.4);
\draw[thick] (11.25,5.0) -- (11.65,5.4);
\draw[thick] (11.5,5.0) -- (11.9,5.4);
\draw[thick] (11.75,5.0) -- (12.15,5.4);
\draw[thick] (12.0,5.0) -- (12.4,5.4);
\draw[thick] (12.25,5.0) -- (12.65,5.4);
\draw[thick] (12.5,5.0) -- (12.9,5.4);

\draw[thick] (7.6,0.1) -- (8.0,0.5);
\draw[thick] (7.6,0.35) -- (8.0,0.75);
\draw[thick] (7.6,0.6) -- (8.0,1.0);
\draw[thick] (7.6,0.85) -- (8.0,1.25);
\draw[thick] (7.6,1.1) -- (8.0,1.5);
\draw[thick] (7.6,1.35) -- (8.0,1.75);
\draw[thick] (7.6,1.6) -- (8.0,2.0);
\draw[thick] (7.6,1.85) -- (8.0,2.25);
\draw[thick] (7.6,2.1) -- (8.0,2.5);
\draw[thick] (7.6,2.35) -- (8.0,2.75);
\draw[thick] (7.6,2.6) -- (8.0,3.0);
\draw[thick] (7.6,2.85) -- (8.0,3.25);
\draw[thick] (7.6,3.1) -- (8.0,3.5);
\draw[thick] (7.6,3.35) -- (8.0,3.75);
\draw[thick] (7.6,3.6) -- (8.0,4.0);
\draw[thick] (7.6,3.85) -- (8.0,4.25);
\draw[thick] (7.6,4.1) -- (8.0,4.5);
\draw[thick] (7.6,4.35) -- (8.0,4.75);
\draw[thick] (7.6,4.6) -- (8.0,5.0);

\draw [fill=black] (13,5) circle (3pt);
\draw [fill=black] (8,0) circle (3pt);

\end{tikzpicture}
\end{center}
\caption{Schematic representation of the setup of the two dimensional test problems. The uniform mean flow domain has horizontal no-flow boundaries and vertical Dirichlet boundaries (left). The quarter five-spot problem has no-flow boundaries on all four sides, but a source (of water) at $(0,0)$ and a sink at $(1,1)$.}
\label{fig:setup}
\end{figure}

\subsection{Two Spatial Dimensions: Line injection}
\label{sec:num_res_rub90}
Consider uniform-in-the-mean flow aligned with the $x$-direction and injection along the line $(x,y)=(0,y)$, $0.25\leq y \leq 0.75$.
The setup is depicted in Figure~\ref{fig:setup} (left). For the velocity field, we use the 2D velocity covariance functions derived in \citep{Rubin_wrr_90} from a log-normal conductivity field. They are statistically homogeneous, i.e. $\mathbf{C}(\mathbf{x},\mathbf{x}')=\mathbf{\tilde{C}}(r_1,r_2)$ with $r_1=x'-x$, $r_2=y'-y$. For reference, the covariance matrix entries are plotted in Figure \ref{fig:cov_rub}. The color scales of the entries are different for visibility. The covariance function for the velocity is based on a first order perturbation approximation and was found to be valid for log-transmissivity variances $\sigma_Y^2 \leq 1$.  Note that this puts a restriction on the validity of the input statistics for this test case, but the stochastic transport solver iteself is not subject to this variance restriction. We believe that the velocity model is sufficiently accurate to demonstrate the representation properties of the KL-MW framework. Thus, for evaluation of the method, we assume that the covariance model represents the true velocity statistics. The velocity field in \citep{Rubin_wrr_90} is multi-variate Gaussian but we assume that it has a symmetric truncated multi-Gaussian distribution, i.e. the shape of the probability density function is essentially Gaussian but bounded to an interval that covers 99.7\% of the total probability. 
\begin{figure}[H]
\centering
\subfigure[ $C_{u_x,u_x}$.]
{\includegraphics[width=0.32\textwidth]{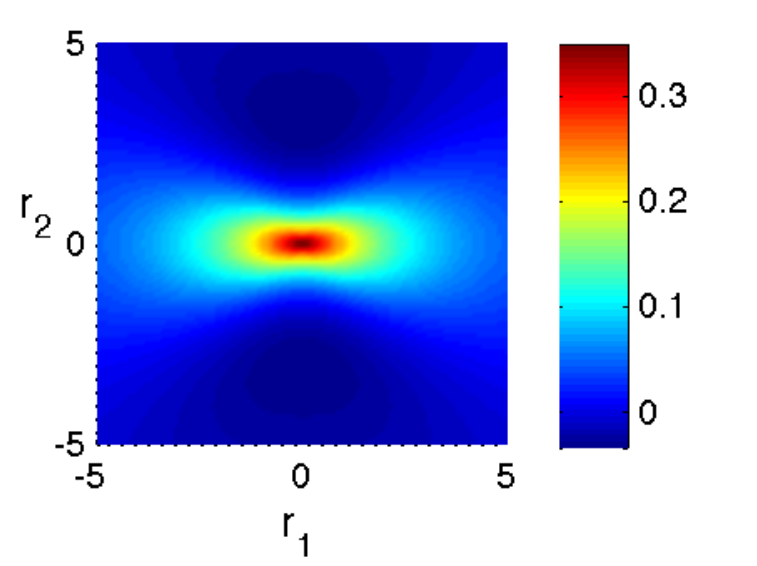}}
\subfigure[$C_{u_y,u_y}$.]
{\includegraphics[width=0.32\textwidth]{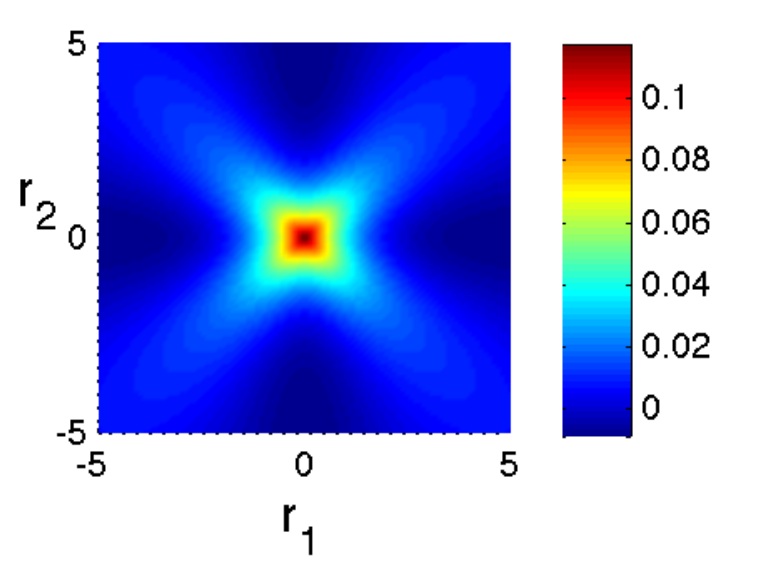}}
\subfigure[$C_{u_x,u_y}$.]
{\includegraphics[width=0.32\textwidth]{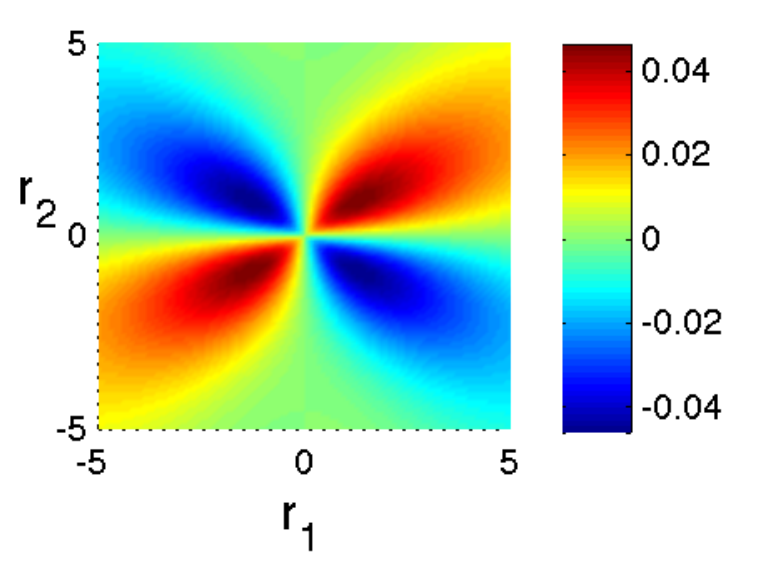}}
	\caption{Covariance matrix entries for the velocity field.}
	\label{fig:cov_rub}
\end{figure}
The solution of the generalized eigenvalue problem \eqref{eq:gen_evpr_kl} is in general computationally expensive and calls for sophisticated methods \citep{Schwab_Todor_06}. However, for this case of stationary covariance, we found that it was sufficient (in terms of speed and accuracy) to discretize \eqref{eq:gen_evpr_kl} with Simpson's rule, despite the lack of smoothness at $\br=\mathbf{0}$.

Consistently with Rubin, we assume no-flow conditions on the horizontal boundaries $(x,0)$ and $(x,1)$ and constant pressure gradient on the boundaries $(0,y)$ and $(1,y)$. For the transport problem, we impose deterministic injection $S=1$ at $x=0$ along the line $y \in (0.25,0.75)$. This condition is enforced weakly through a penalty term which is dependent on the stochastic horizontal velocity $u_x$. It is worthwhile to note that the treatment of this boundary condition is essential since the stochastic fluctuation of the transverse velocity $u_y$ would otherwise impact $S$ at $x=0$ over time. The stochastic fluctuation in $u_y$ leads to transport in the  $y$-direction at the injection boundary.

%

\begin{figure}[H]
\centering
\subfigure[$t=0.5$, multiwavelets.]
{\includegraphics[width=0.48\textwidth]{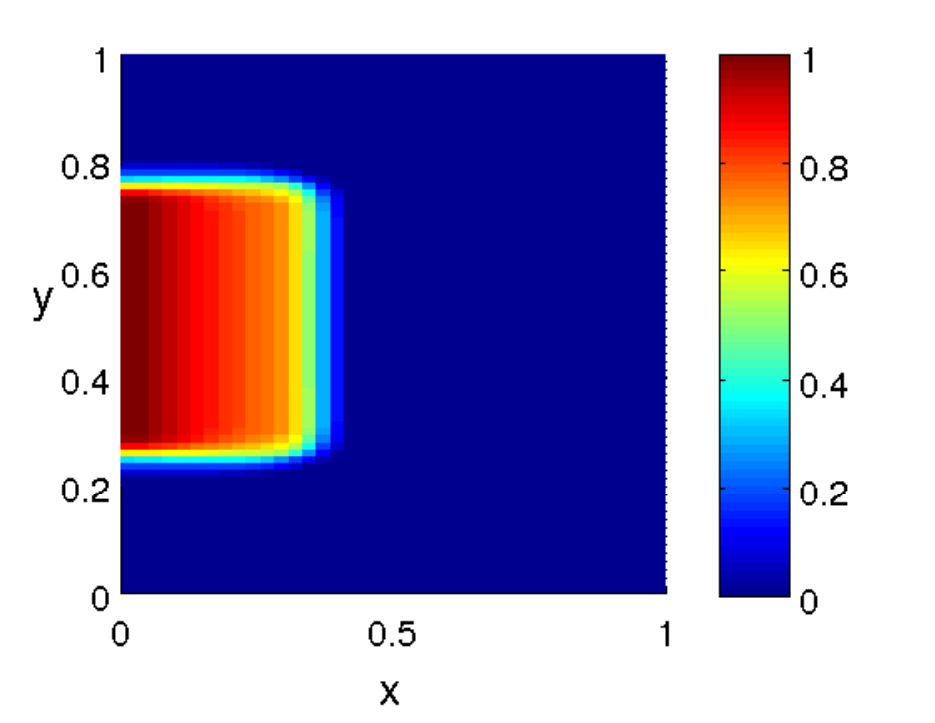}}
\subfigure[$t=0.5$, Monte Carlo.]
{\includegraphics[width=0.48\textwidth]{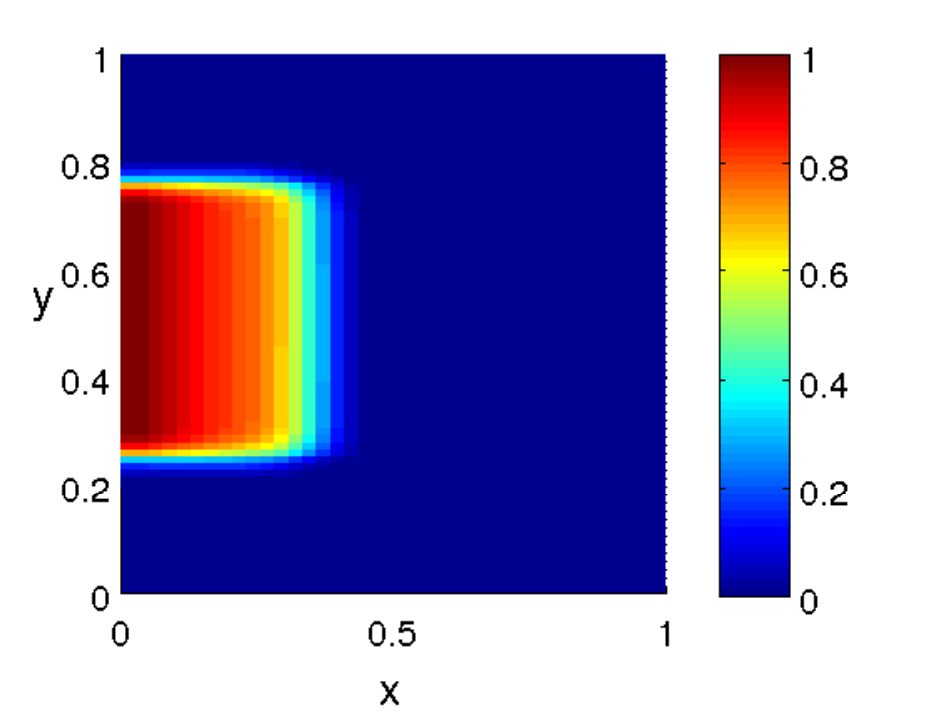}}
\subfigure[$t=1$, multiwavelets.]
{\includegraphics[width=0.48\textwidth]{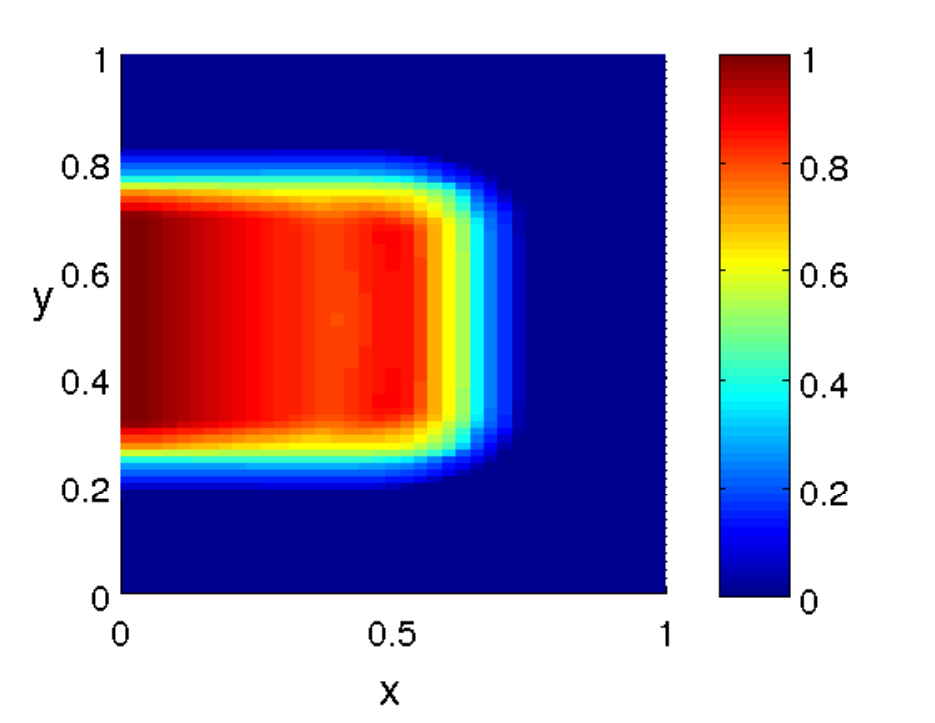}}
\subfigure[$t=1$, Monte Carlo.]
{\includegraphics[width=0.48\textwidth]{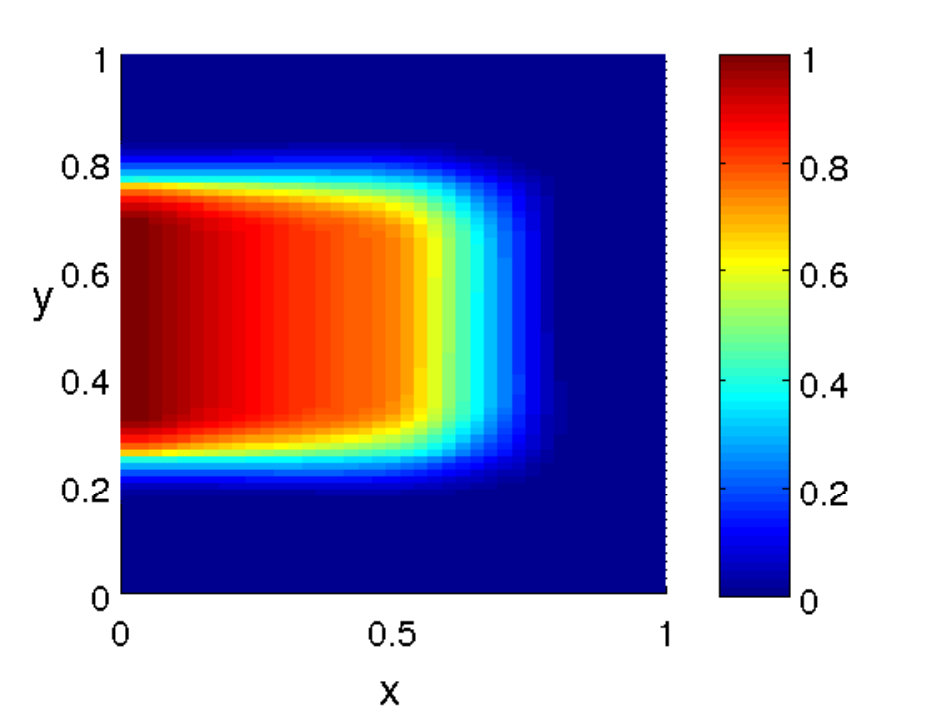}}
	\caption{Line injection. Mean value of the saturation, $t=0.25,0.5$.}
	\label{fig:line_mean}
\end{figure}

\begin{figure}[H]
\centering
\subfigure[$t=0.25$, multiwavelets.]
{\includegraphics[width=0.48\textwidth]{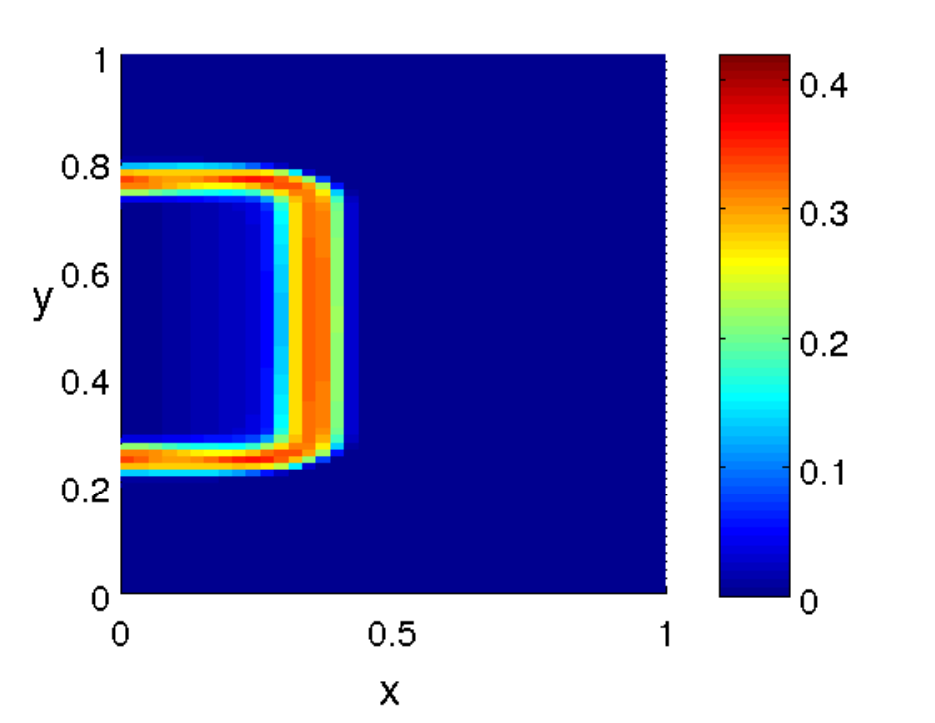}}
\subfigure[$t=0.25$, Monte Carlo.]
{\includegraphics[width=0.48\textwidth]{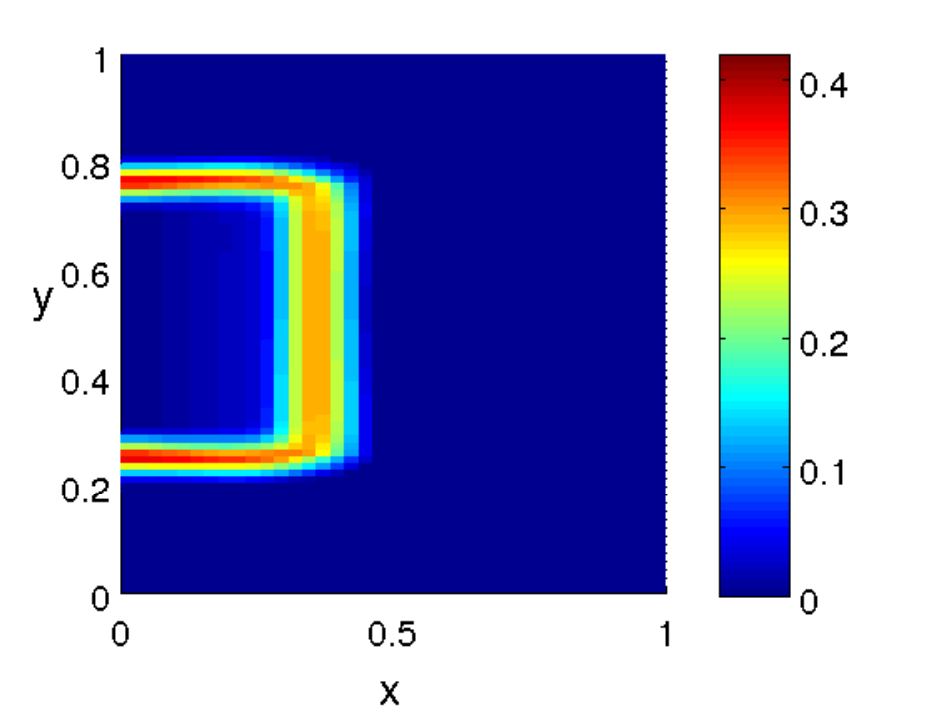}}
\subfigure[$t=0.5$, multiwavelets.]
{\includegraphics[width=0.48\textwidth]{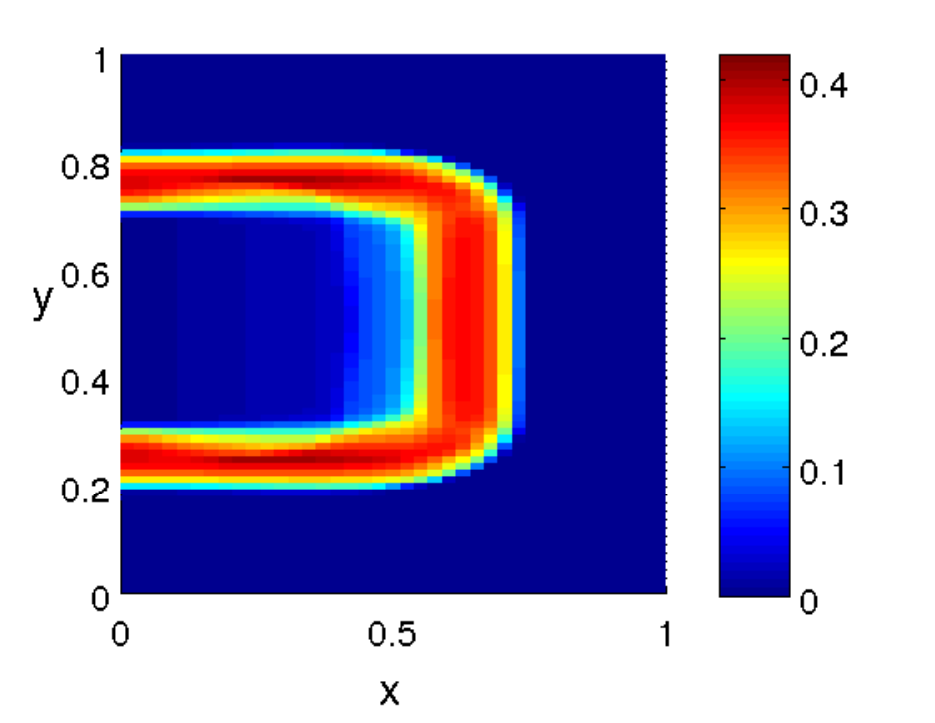}}
\subfigure[$t=0.5$, Monte Carlo.]
{\includegraphics[width=0.48\textwidth]{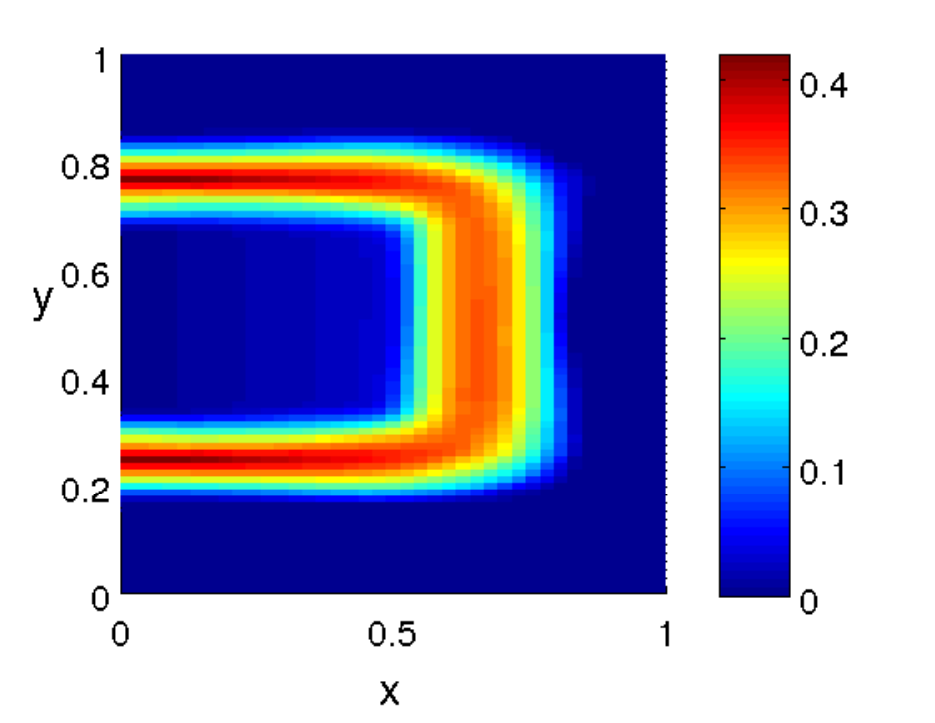}}
	\caption{Line injection. Standard deviation of the saturation, $t=0.25,0.5$.}
	\label{fig:line_std}
\end{figure}


Numerical results  are shown in Figure~\ref{fig:line_mean} for the mean value and in Figure~\ref{fig:line_std} for the standard deviation of the saturation. The velocity field is obtained from a permeability field of 4 random dimensions. A total-order basis of up to second-order Legendre polynomials was used for the multiwavelet saturation. The multiwavelet solution is compared to a Monte Carlo solution based on 1000 samples. This is a relatively small number, but adding more samples had a negligible effect on the Monte Carlo solution of this problem. The Monte Carlo samples are generated with a deterministic finite volume solver with the flux~\eqref{eq:hll_flux}. We use the minmod flux limiter which leads to the correct Kruzhkov entropy solution as observed in \citep{Kurganov_etal_07}. 

The uncertainties in both velocity components lead to uncertainty in the location of the saturation front. To resolve the variance in the $y$-direction, a relatively fine mesh was required compared to the $x$-direction ($m_y=80$, and $m_x=40$ grid cells, respectively). The MW solution requires finer resolution than any Monte Carlo sample solution due to small-scale variability in the higher order MW coefficients. The relative similarities of the Monte Carlo and MW solutions in Figures~\ref{fig:line_mean} and \ref{fig:line_std} indicate that the spatial mesh is sufficiently fine and that the retained order of MW is sufficient.


%
%

\subsection{Two Spatial Dimensions: Quarter Five-spot Problem}
\label{num_res_qfs}
We next consider a random velocity field where the mean field is defined by the quarter five-spot problem, i.e. streamlines originating from the lower left corner $(0,0)$ and ending at the upper right corner $(1,1)$ of the domain. No-flow is imposed over the domain boundaries. The setup is shown in Figure~\ref{fig:setup} (right). To obtain statistics for the saturation of the quarter five-spot problem, the multiwavelet representation of the total seepage velocity is determined in a preprocessing step. We represent the permeability field using the truncated KL expansion \eqref{eq:kl_exp} based on the assumption of a lognormal random field and an exponential covariance function. For all spatial grid cells, the permeability field is evaluated at stochastic quadrature points and used as input to an algebraic multi-grid pressure solver~\citep{Meyer_Tchelepi_10}. Assuming unit porosity, the MW coefficients of the total velocity or flux field are then approximated by numerical quadrature,
\[
(q_{(x)})_{k}(\bm{x}) = \int q_{(x)}\left(\bm{x}, \boldsymbol{\xi}\right) \psi_{k}(\boldsymbol{\xi})d\mathcal{P}(\boldsymbol{\xi}) \approx \sum_{j=1}^{N_q} q_{(x)}\left(\bm{x}, \boldsymbol{\xi}^{(j)}\right) \psi_{k}(\boldsymbol{\xi}^{(j)})w_j,
\]
where $\{ \boldsymbol{\xi}^{(j)} \}_{j=1}^{N_q}$ and $\{ w_{j} \}_{j=1}^{N_q}$ is the set of quadrature points and weights of a $N_q$-point quadrature rule. Note that the weights are accounting for the probability density function of the parameterization vector $\boldsymbol{\xi}$. We use the tensorized Gauss-Legendre quadrature rule in the Parameterized Matrix Package\footnote{Copyright 2009-2010, Paul G. Constantine and David F. Gleich.}.

Numerical results for the statistics of the saturation are shown in Figures~\ref{fig:q5s_mean} and \ref{fig:q5s_std}, for $t=0.5$, $t=1$, and $t=2$, respectively. The spatial mesh consists of 50 by 50 spatial cells and an order $P=15$ MW expansion is used for the saturation (total-order basis in two stochastic dimensions and up to fourth order polynomial reconstruction). The uncertain velocity field results in uncertain location of the front which is where the variance attains its maximum. An effect of the uncertainty in the saturation front is that the mean solution appears more smeared than any single realization of the saturation field. The slight smearing observed in the numerical results is thus a stochastic effect rather than a feature of the numerical method. The MW solution captures the mean and standard deviation. The difference from the reference Monte Carlo solution is most clearly visible in the standard deviation for large times. The numerical error is attributed to the truncation error of the MW expansion. In order to decrease the error, the order of MW expansion should be increased together with spatial mesh refinement since high-order MW modes vary on a shorter scale than low-order MW modes.

%
%

\begin{figure}[H]
\centering
\subfigure[$t=0.5$, multiwavelets.]
{\includegraphics[width=0.48\textwidth]{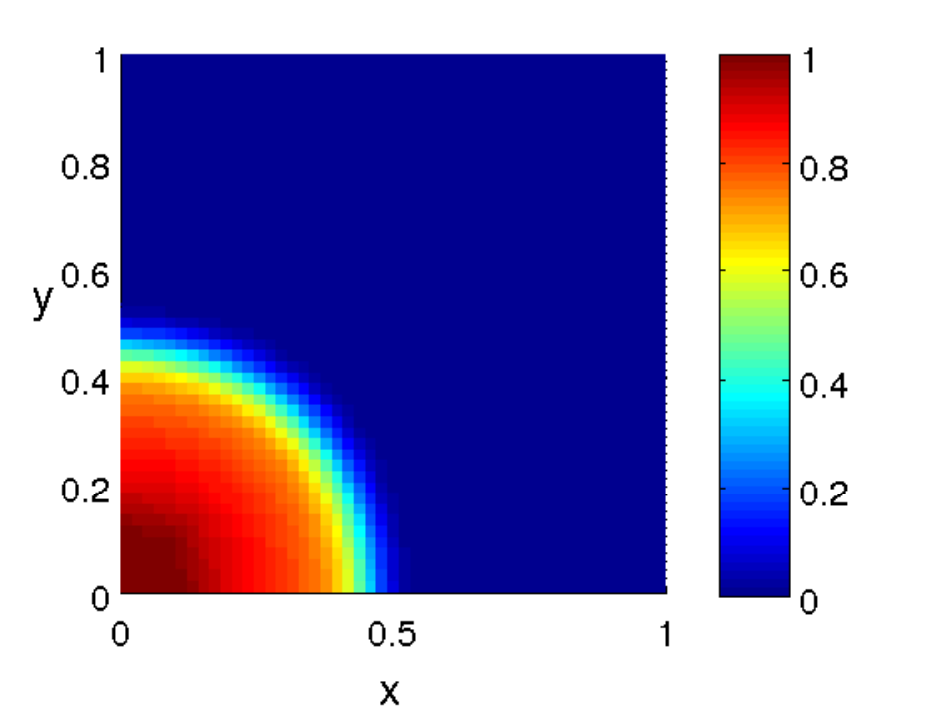}}
\subfigure[$t=0.5$, Monte Carlo.]
{\includegraphics[width=0.48\textwidth]{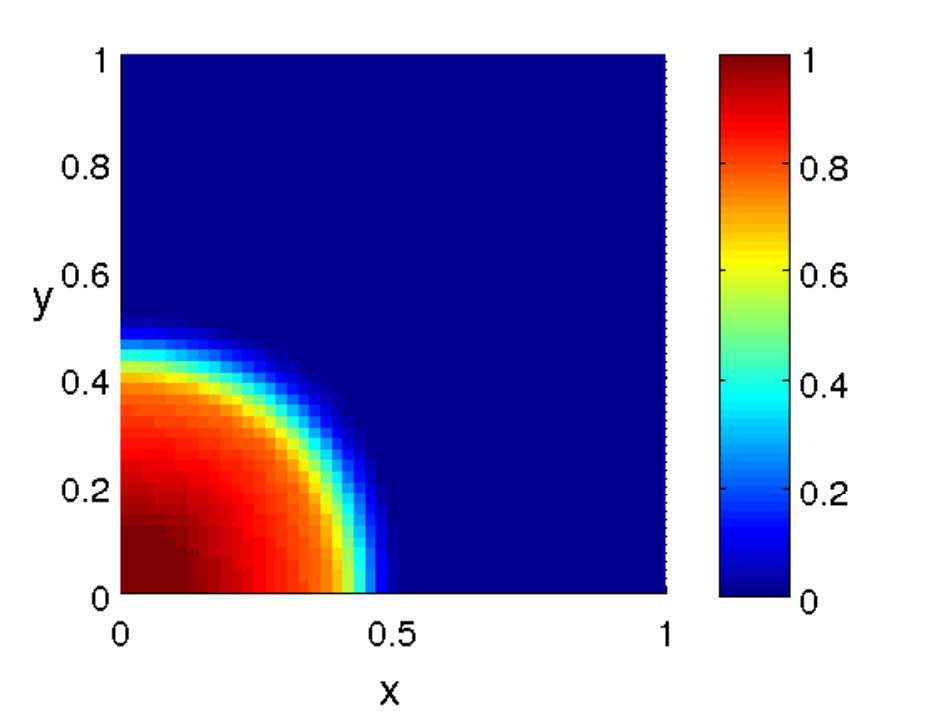}}
\subfigure[$t=1$, multiwavelets.]
{\includegraphics[width=0.48\textwidth]{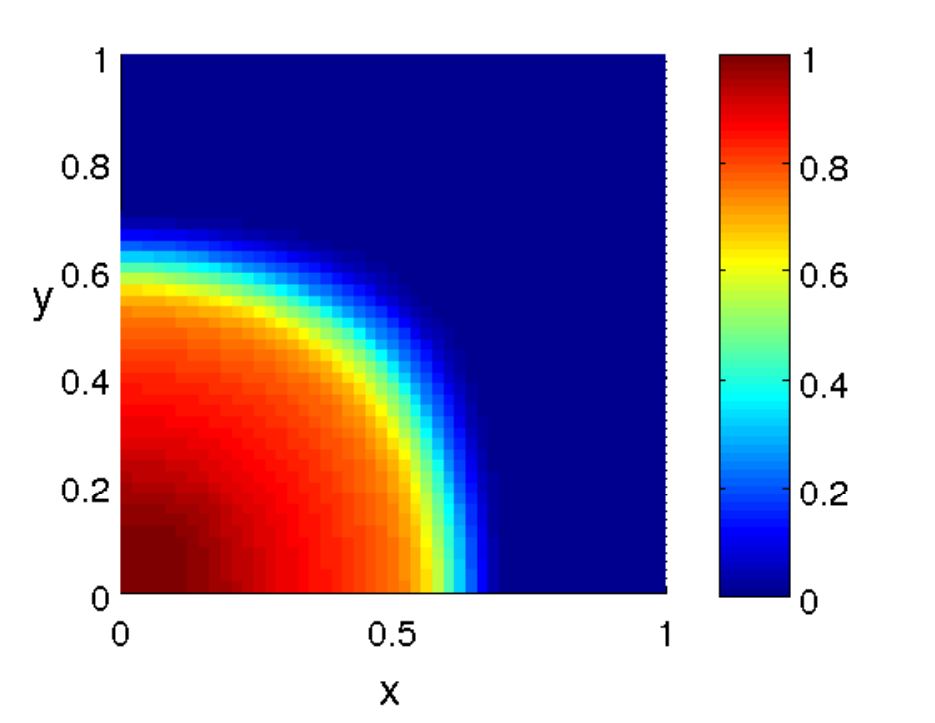}}
\subfigure[$t=1$, Monte Carlo.]
{\includegraphics[width=0.48\textwidth]{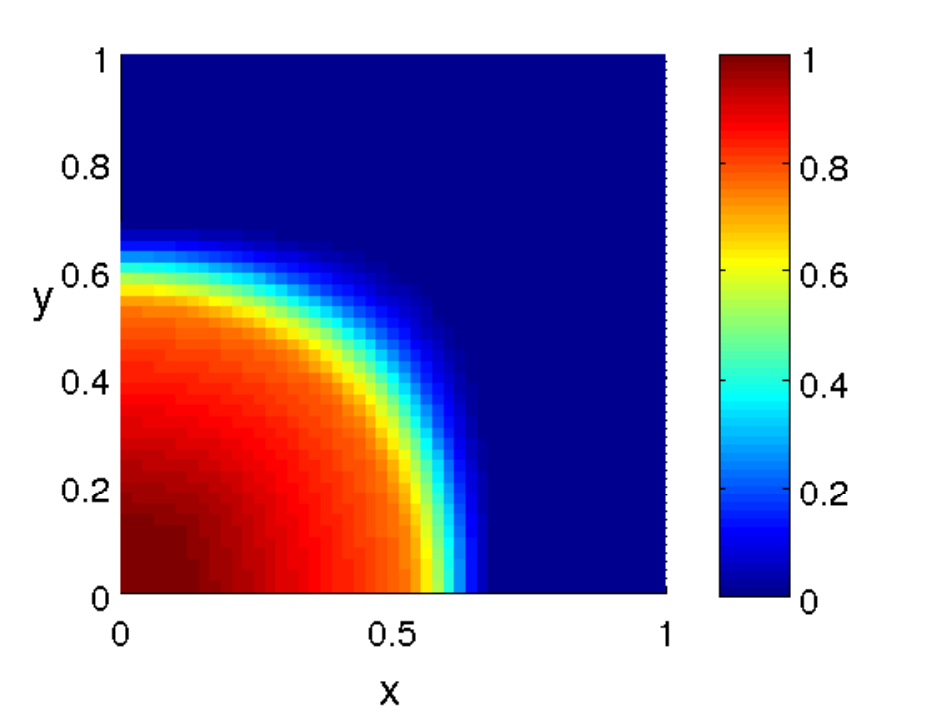}}
\subfigure[$t=2$, multiwavelets.]
{\includegraphics[width=0.48\textwidth]{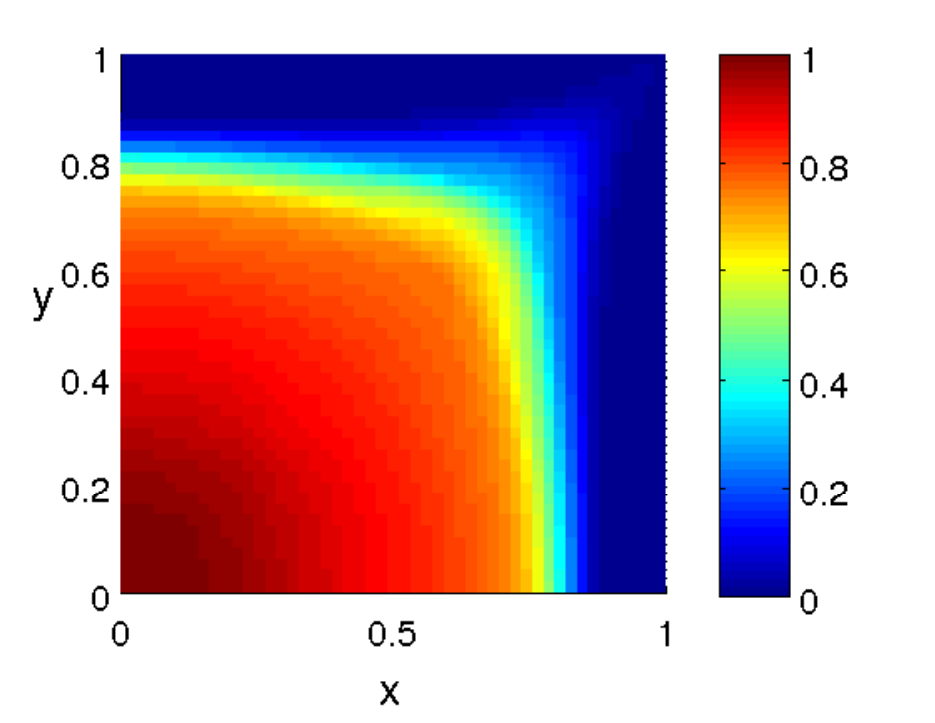}}
\subfigure[$t=2$, Monte Carlo.]
{\includegraphics[width=0.48\textwidth]{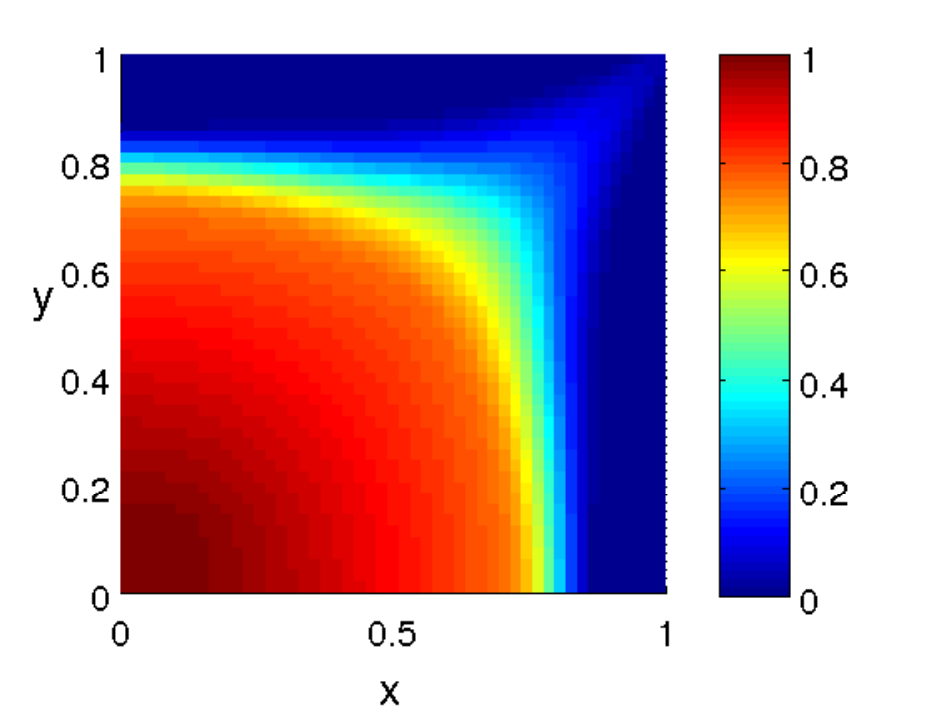}}
	\caption{Quarter-five-spot problem. Mean value of the saturation, $t=0.5,1,2$.}
	\label{fig:q5s_mean}
\end{figure}

\begin{figure}[H]
\centering
\subfigure[$t=0.5$, multiwavelets.]
{\includegraphics[width=0.48\textwidth]{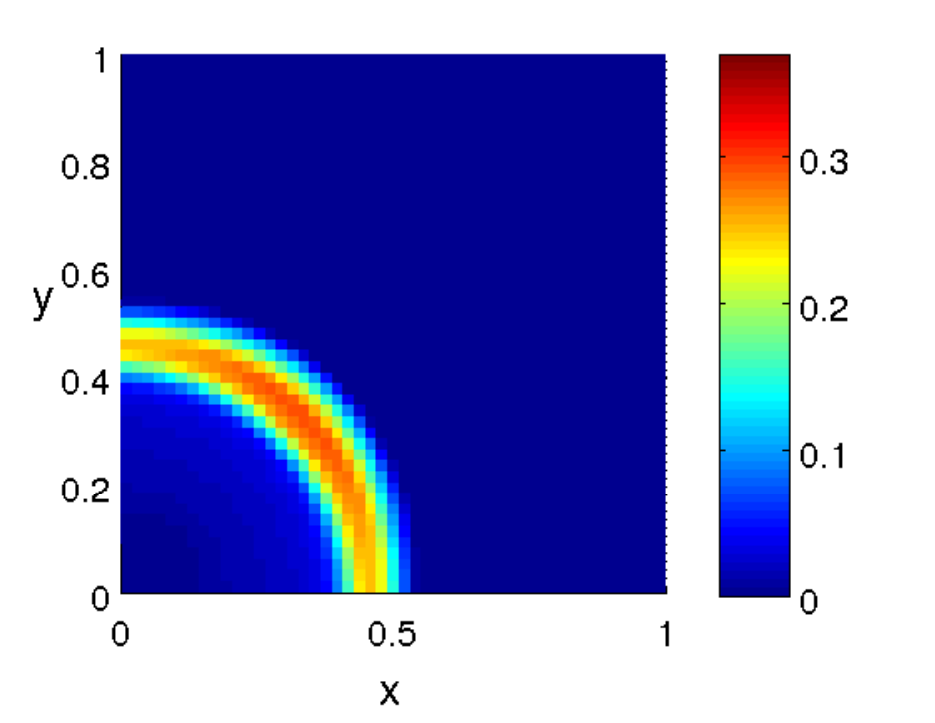}}
\subfigure[$t=0.5$, Monte Carlo.]
{\includegraphics[width=0.48\textwidth]{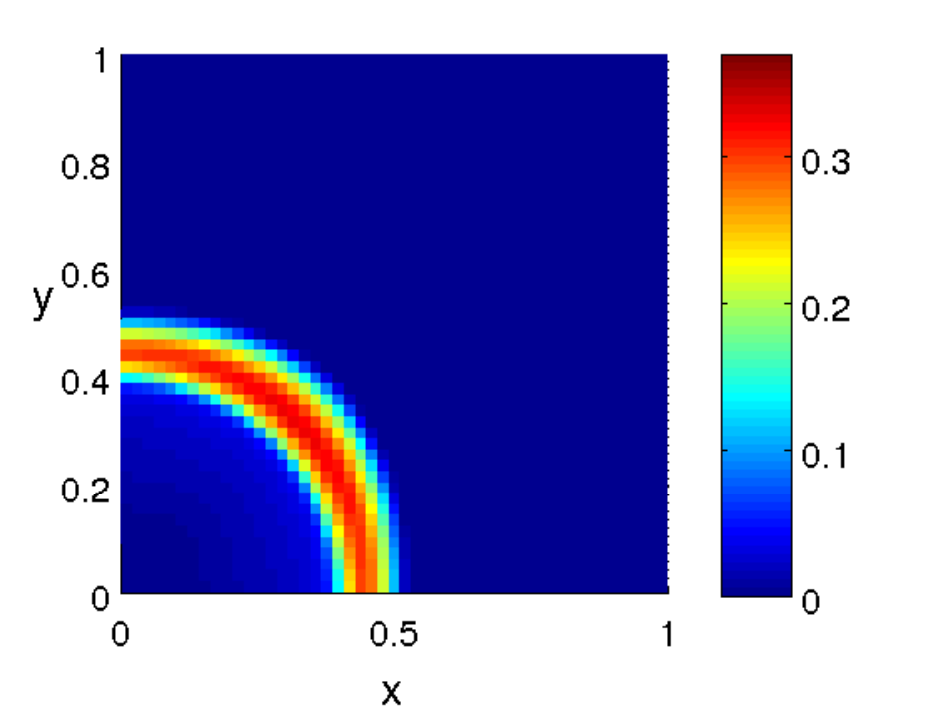}}
\subfigure[$t=1$, multiwavelets.]
{\includegraphics[width=0.48\textwidth]{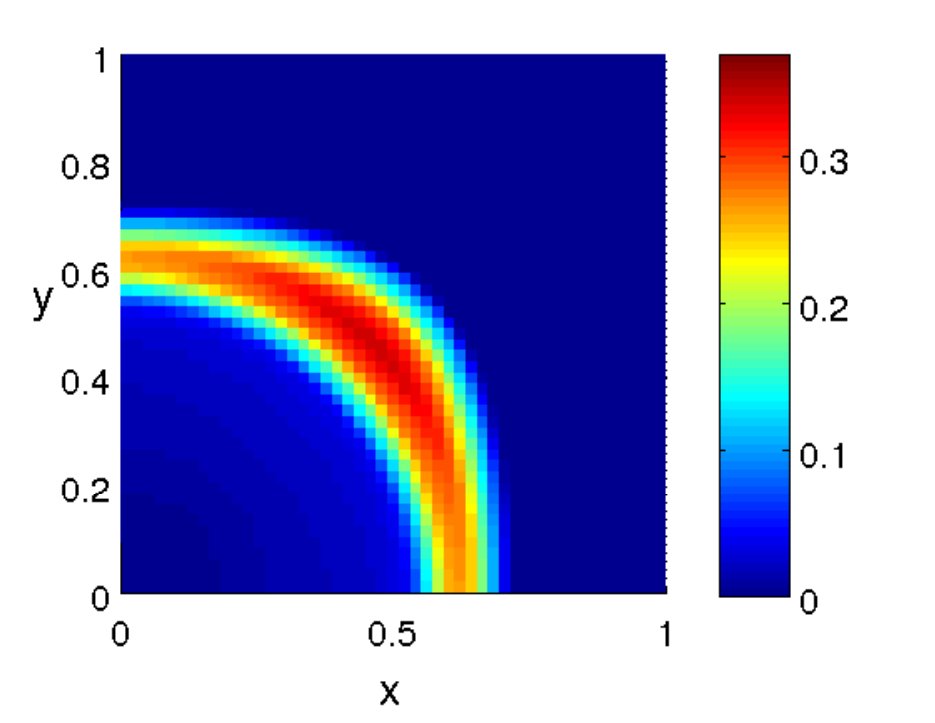}}
\subfigure[$t=1$, Monte Carlo.]
{\includegraphics[width=0.48\textwidth]{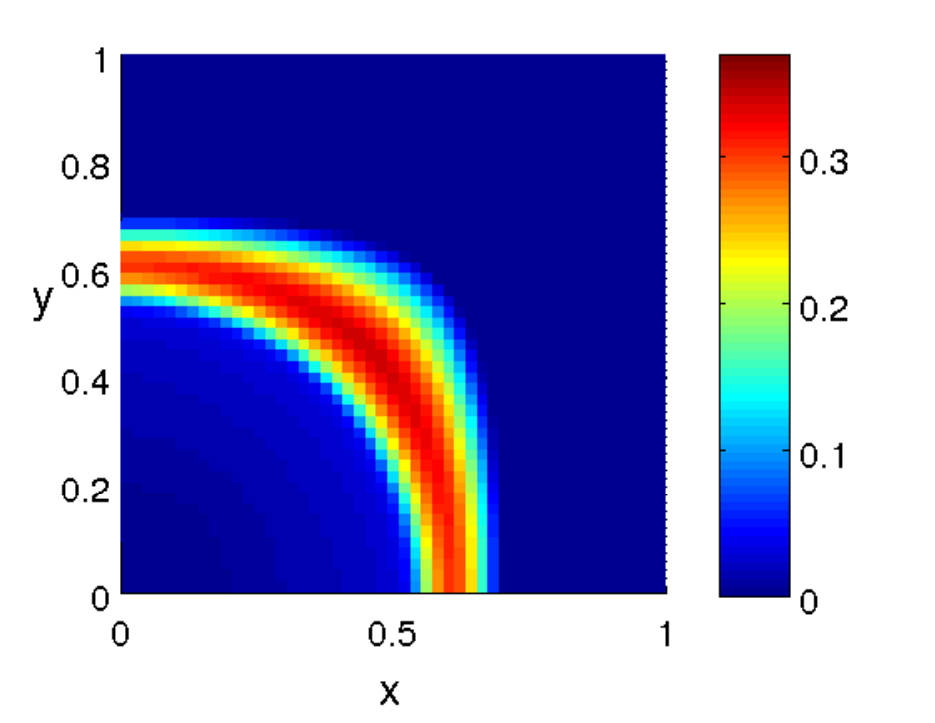}}
\subfigure[$t=2$, multiwavelets.]
{\includegraphics[width=0.48\textwidth]{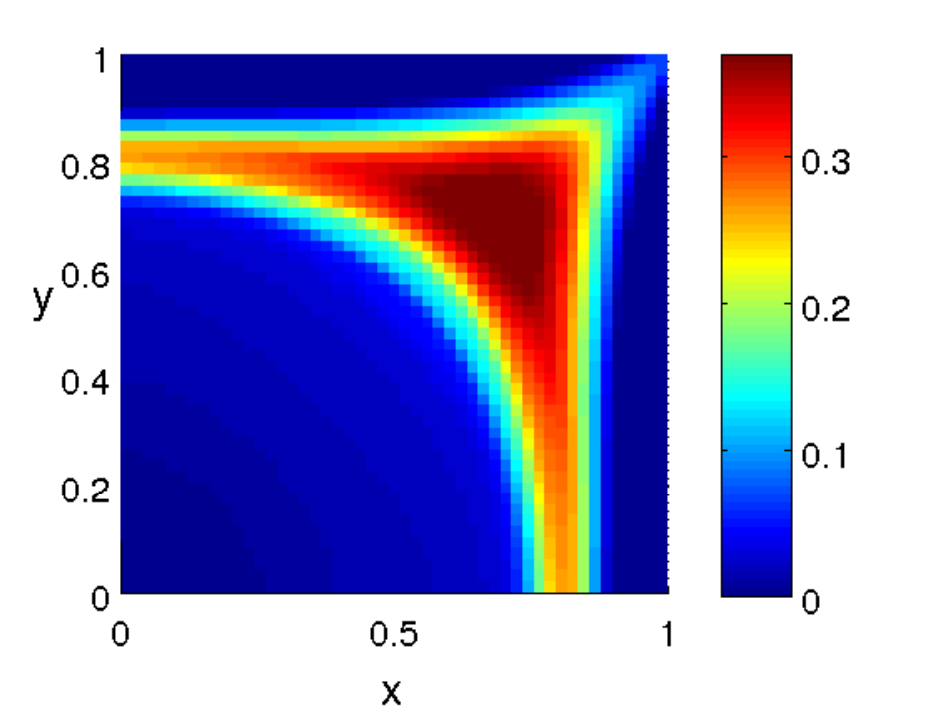}}
\subfigure[$t=2$, Monte Carlo.]
{\includegraphics[width=0.48\textwidth]{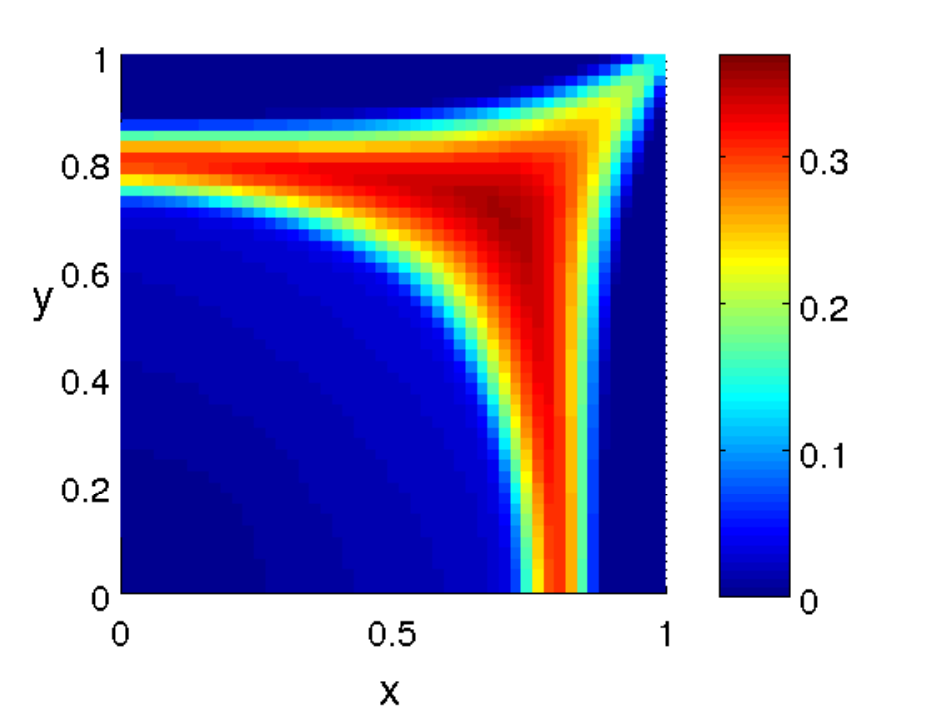}}
	\caption{Quarter-five-spot problem. Standard deviation of the saturation, $t=0.5,1,2$.}
	\label{fig:q5s_std}
\end{figure}


The locally reduced-order method is compared in terms of computational time to the standard stochastic Galerkin formulation by varying the order $P$ of MW expansion. The input velocity is obtained from a spatially varying permeability field with lognormal distribution and a KL expansion with 4 stochastic dimensions. The field is represented by multiwavelets that are not the optimal representation of this input uncertainty. An optimal representation (in the sense of minimum number of nonzero MW coefficients), i.e., basis functions tailored for this velocity field, would admit a more sparse representation and thus allow greater speedup of the locally reduced-order method. As can be inferred from the results displayed in Figure~\ref{fig:temp_comp_2D}, there is still a factor 6 speedup for large $P$.
\begin{figure}[H]
\centering
\subfigure 
{\includegraphics[width=0.48\textwidth]{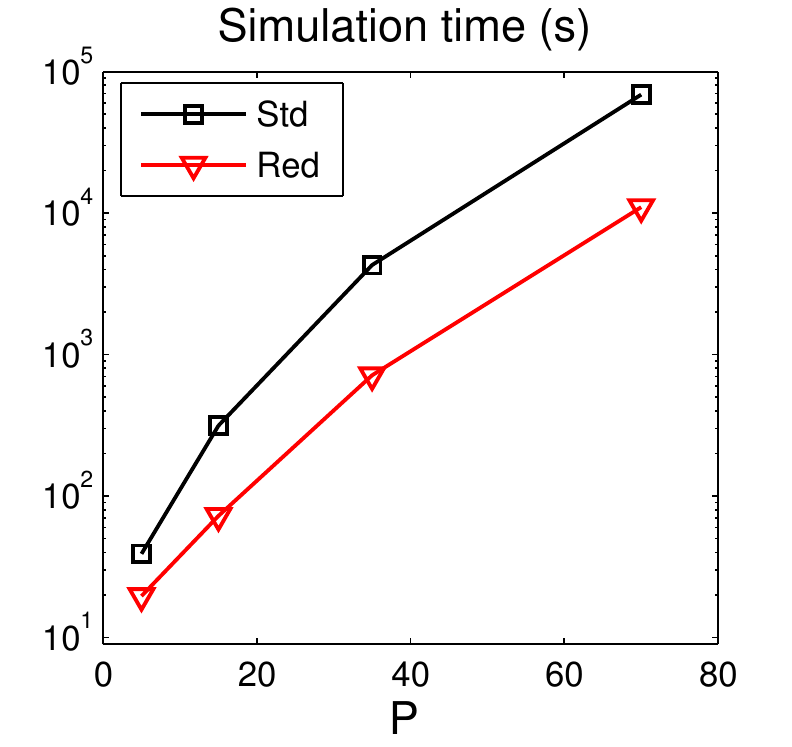}}
	\caption{Simulation times for the standard full-order (Std) and the reduced-order (Red) stochastic Galerkin formulations as a function of the order of MW expansion.}
	\label{fig:temp_comp_2D}
\end{figure}


\section{Conclusions}
We have shown that the fully intrusive stochastic Galerkin formulation of the Buckley-Leverett equation is hyperbolic independently of the order of the MW expansion. The approximate problem thus retains important qualitative features of the original stochastic formulation, e.g. wave propagation of the initial saturation and the emergence of a discontinuous solution in finite time. This analysis motivated the choice of a shock-capturing finite volume solver with a local reduction of the order of MW expansion for computational efficiency. For the orders of MW investigated, the numerical solution converges to the reference solution generated by Monte Carlo sampling.

The numerical solver performs well for the stochastic Galerkin formulation of the Buckley-Leverett problem with the pseudo-spectral flux approximation and multiwavelet representation of the uncertainty. For sufficient spatial resolution, the numerical solutions display multiple discontinuities depending on the order $P$ of stochastic truncation. In large-scale problems with multiple sources of uncertainty, discontinuities may not be visible due to lack of resolution and smoothing effects from stochastic averaging. Nevertheless, only a robust numerical scheme with shock-capturing properties can produce a solution that converges with mesh refinement.

Two spatially two-dimensional problems have been investigated: a line-injection problem and a quarter five-spot problem. For the line injection problem, we have shown how to incorporate analytically derived correlation functions in the MW framework. For the 
quarter five-spot problem, the MW representation of the input velocity field was obtained from a lognormal permeability field with an algebraic multigrid solver. The multiple discontinuities are not resolved due to the high order of MW in either problem, but the resolution is still sufficient for good agreement with the Monte Carlo reference solutions.

We have demonstrated substantial speedup for a locally reduced-order basis stochastic Galerkin method, as compared to the standard implementation of the stochastic Galerkin method with global basis functions. The rationale for the method presented here is that solutions of hyperbolic systems consist of waves and thus are localized in space and time. At a given point in space and time, only a subset of all waves contribute significantly to the solution. The method presented is adaptive in the sense that it identifies locally significant waves and use only them in the numerical flux computations. An attractive feature of the method is that no apriori information about the solution is assumed. All stochastic modes are updated and this ensures that temporarily insignificant modes are accounted for as soon as they grow above a given threshold. The method is  expected to be efficient also for other problems described by wave propagation, e.g., computational fluid dynamics. This will be investigated in more detail elsewhere.

Future work also includes evaluation of the presented local basis reduction method on stochastic permeability fields conditioned on measurements. The adaptivity in the complexity of the stochastic representation should have a significant impact on the total computational cost since the optimal stochastic representation is expected to be highly localized in a spatially heterogeneous field.

\section*{Acknowledgements}
The authors would like to gratefully acknowledge the Reservoir Simulation Research Program (SUPRI-B) at Stanford University for funding of this project, and the first author would also like to acknowledge the SUCCESS centre for CO$_2$ storage under grant 193825/S60 from the Research Council of Norway (RCN) for funding. We also would like to thank Dr. Daniel Meyer and Dr. Anna Nissen for help with the design of the quarter five-spot problem and Fayadhoi Ibrahima for valuable suggestions and advice on the presentation of the material.



\bibliographystyle{plain}

\end{document}